\documentclass[11pt, a4paper]{article}
\usepackage{natbib}
\usepackage{graphicx}
\usepackage{amsmath,amssymb,amsthm,mathtools}
\usepackage{subcaption}
\allowdisplaybreaks[2]
\usepackage{hyperref}
\usepackage{cleveref}
\usepackage{bm}
\usepackage{mathrsfs}
\usepackage{enumitem}
\usepackage[ruled]{algorithm2e}
\usepackage{color}
\usepackage{xcolor}
\numberwithin{equation}{section}
\newtheorem{theorem}{Theorem}[section]
\newtheorem{lemma}[theorem]{Lemma}
\newtheorem{proposition}[theorem]{Proposition}

\newtheorem{remark}[theorem]{Remark}

\newcommand{\curl}{\operatorname{curl}}
\newcommand{\diver}{\operatorname{div}}
\newcommand{\ran}{\operatorname{Range}}

\newcommand{\im}{\operatorname{Im}}
\newcommand{\re}{\operatorname{Re}}
\newcommand{\dd}{\,\mathrm d}
\newcommand{\ii}{\mathrm i}
\newcommand{\R}{\mathbb R}
\newcommand{\GammaR}{\Gamma_R}

\allowdisplaybreaks[4]

\title{A Factorization Method for Support Recovery in Magnetic Induction Tomography}
\author{Junqing Chen\thanks{\footnotesize Corresponding author.
Department of Mathematical Sciences, Tsinghua University, Beijing
100084, China. (jqchen@tsinghua.edu.cn).}
\and Chengzhe Jiang\thanks{\footnotesize
Department of Mathematical Sciences, Tsinghua University, Beijing
100084, China. (jiangcz24@mails.tsinghua.edu.cn)}}

\date{} 

\begin{document}

\maketitle

\begin{abstract}
We develop a factorization framework for magnetic induction tomography in the eddy-current regime. The induced current density is represented by a divergence-free current vector potential, and the external excitation by magnetic dipole densities on an observation sphere. This choice reveals a natural physical factorization of the near-field operator into a data operator, an interior boundary response operator, and an adjoint counterpart. By introducing the Riesz map on the flux space, we recast this factorization in a Hilbert-space setting, which yields a positive-semidefinite operator with a coercive middle factor. Douglas' range theorem then identifies the range of the square root of this dissipative operator with the range of the data operator. For inclusions with a regular boundary, this range identity yields a uniqueness theorem for the support of the conductivity. This characterization leads to a non-iterative spectral indicator based on a regularized Picard quotient computed from the Hermitian imaginary part of the near-field data matrix. The reconstruction stage requires no forward solves. Numerical experiments confirm that the indicator localizes and separates the inclusions accurately, degrades gracefully under increasing noise, and captures non-convex support geometry without any convexity prior.
\end{abstract}
{{\bf Mathematics Subject Classification}(MSC2020): 35R30, 65N21, 78A46}\\
{{\bf Keywords:} magnetic induction tomography, factorization method, inverse eddy current problem, support recovery, non-iterative reconstruction }

\section{Introduction}

Magnetic induction tomography (MIT), also called electromagnetic induction
tomography or eddy-current tomography, is a non-contact imaging modality for
conductive media. An alternating excitation current produces a primary magnetic
field, which induces eddy currents in conductive regions. These currents generate a secondary magnetic field that is measured by the receiver coils. The measured data are then used to recover the conductive structure. The absence of electrodes makes MIT attractive in biomedical imaging and spectroscopy, as well
as in industrial nondestructive testing
\cite{Griffiths2001,ScharfetterCasanasRosell2003,GebauerHankeSchneider2008}.

In general, MIT operates in the low-frequency regime. At such frequencies, the displacement-current term in Maxwell's equations is negligible, leading to the eddy-current approximation. The data are measured away from the unknown conductors.
The resulting inverse
problem is severely ill posed \cite{ChenLiangZou2020}. In practical MIT reconstruction,
one often minimizes a regularized data-misfit functional by iterations. Such iterative
approaches can incorporate prior information and realistic coil models, but
require repeated large-scale forward solves and may be sensitive to
initialization and regularization choices
\cite{ChenLong2024,ScharfetterCasanasRosell2003}.


These costs motivate non-iterative, qualitative methods, which are called direct methods in the literature. They test directly whether a sampling point lies inside the unknown object without reconstructing the full material distribution. One of the first systematic approaches is the linear sampling method, originally proposed by Colton and Kirsch \cite{ColtonKirsch1996} for inverse scattering problems and later extended to electromagnetic inverse scattering \cite{ColtonHaddarMonk2003}. 
Another major development is the factorization method \cite{KirschGrinberg2008}, which exploits the range properties of specific operators to reconstruct the unknown support. It was first established for inverse scattering problems \cite{Kirsch1998}, and was later developed for Maxwell scattering \cite{Kirsch2004}, then was extended to electrical impedance tomography (EIT) problem by Br\"uhl \cite{Bruhl2001} and to low-frequency electromagnetic imaging by Gebauer, Hanke and Schneider \cite{GebauerHankeSchneider2008}.
In addition, there are other direct methods in which an indicator is constructed by correlating measured data with suitably chosen probing functions. Due to the extensive literature on these methods, we list only a few representative works: the reverse time migration for inverse acoustic scattering problems \cite{Chen_2013}, and the direct sampling method for inverse electromagnetic scattering problems \cite{ItoJinZou2013} and EIT \cite{ChowItoZou2014}.

For MIT problems specifically, non-iterative imaging is less
developed, but several distinct lines of work exist. For small conductive
targets, Ammari and coauthors derived asymptotic formulas and MUSIC-type
location algorithms based on conductive polarization tensors
\cite{AmmariChenChenVolkovWang2013}, and Haddar and Riahi developed a near-field
linear sampling method for axisymmetric configurations \cite{HaddarRiahi2021}.
Arnold and Harrach proved unique shape detection for transient eddy currents and
related their constructive criterion to the factorization method
\cite{ArnoldHarrach2013}. We introduced a direct sampling method for MIT in our recent work
\cite{ChenJiang}. The most extensively developed framework is the
monotonicity principle method, introduced for electrical resistance tomography
by Tamburrino and Rubinacci \cite{TamburrinoRubinacci2002} and carried to the
magneto-quasi-stationary regime through the monotonicity of the source-free
time constants \cite{TamburrinoPiscitelliZhou2021} and of a Laplace-domain transfer operator \cite{TamburrinoCorboPiscitelli2026}. Despite this activity, the factorization method has not, to the best of our knowledge, been established for the time-harmonic eddy-current near-field operator of MIT. 

This paper aims to address this gap. We develop a factorization framework for support recovery in a time-harmonic eddy-current MIT model with constant magnetic permeability and conductivity
supported in an unknown bounded region \(D\). The key observation is that the
induced current \(J=\sigma E\) admits a current vector potential \(M\) with
\(J=\curl M\), \(\diver M=0\), and \(M\times n=0\) on \(\partial D\). This
reduces the full-space problem to an interior variational problem in \(D\), in
which the exterior induced field depends only on the normal flux \(M\cdot n\).
We introduce the flux space \(X_D\), the data operator \(G_D\), and the boundary
response operator \(B_D\), and obtain the physical factorization of the near-field operator
\(F_D=G_D B_D G_D^*\). Passing through the Riesz map on \(X_D\) to separate the
dual adjoint from the Hilbert-space adjoint, and using the coercivity of the
dissipative part of the Riesz-lifted response, we obtain the positive
factorization \(\im F_D = G_D P_D G_D^\dagger\). Douglas' range theorem then
gives the central identity
\[
\ran\bigl((\im F_D)^{1/2}\bigr)=\ran(G_D).
\]

For inclusions whose connected components are either of class \(C^{1,1}\) or
convex, this turns into the pointwise characterization
\[
a\in D
\quad\Longleftrightarrow\quad
\varphi_{a,p}\in\ran\bigl((\im F_D)^{1/2}\bigr),
\]
for dipole sampling fields \(\varphi_{a,p}\), so that the conductive support is
uniquely determined by the magnetic near-field operator. This characterization
suggests a non-iterative spectral indicator. We derive its finite-dimensional realization and validate it on simulated data.

The paper is organized as follows. Section \ref{sect2} formulates the full-space
eddy-current model, derives the current-potential formulation, and proves the
well-posedness of the interior variational problem. Section \ref{sect3} constructs the
near-field operator and proves the factorization range identity. These two
sections require only that \(D\) be a bounded Lipschitz open set that
satisfies the topological conditions described in the following. Section \ref{sect4} uses the factorization
identity to characterize and uniquely determine the support for domains 
whose connected components are either of class \(C^{1,1}\) or convex. 
Section \ref{sect5} presents the discrete spectral indicator and the numerical experiments.
Finally, section \ref{sect6} concludes the paper. 
\section{Problem setup} \label{sect2}

\subsection{Geometric assumptions and functional spaces}

Let \(D\Subset B_R\subset\R^3\) be the conducting region, possibly with finitely
many connected components \(D_1,\ldots,D_N\), and let
\[
\GammaR:=\partial B_R
\]
be the measurement and excitation sphere. Throughout Sections~2 and~3 we assume
that each \(D_j\) is a bounded Lipschitz open set, is simply connected, and has
connected boundary. The connected-boundary assumption excludes insulating
cavities, and simple connectedness excludes handle-type components. 
Additional regularity properties (P1)--(P2) satisfied by
\(C^{1,1}\) and convex domains are imposed only in Section~4.

We assume that the conductivity $\sigma\in L^\infty(\mathbb R^3)$ is supported in \(D\):
\[
\sigma(x)=0\quad\text{in }\R^3\setminus\overline D,
\qquad
0<\sigma_0\leq \sigma(x)\leq \sigma_1<\infty
\quad\text{in }D.
\]
The magnetic permeability \(\mu>0\) is constant in all of \(\R^3\). We work with
the time convention \(e^{-\ii\omega t}\), where \(\omega>0\). All function
spaces are complex. We use Hilbert-space inner products that are linear in the
first argument and antilinear in the second. Dual spaces are anti-duals. Thus
\(X'\) denotes the space of continuous anti-linear functionals on \(X\).

We use the standard Hilbert spaces of vector fields 
with square-integrable curl and divergence, respectively
\[
\begin{aligned}
H(\curl,D):=&\left\{u\in L^2(D)^3:\curl u\in L^2(D)^3\right\},
\qquad\\
H(\diver,D):=&\left\{u\in L^2(D)^3:\diver u\in L^2(D)\right\},
\end{aligned}
\]
and we write \(H_0(\curl,D)\) for the subspace of \(H(\curl,D)\) whose elements
have vanishing tangential trace \(u\times n=0\) on \(\partial D\).

The current vector potential will be sought in
\begin{equation}
A_D:=
\left\{
M\in H_0(\curl,D)\cap H(\diver,D):
\diver M=0
\right\}.
\label{eq:AD}
\end{equation}
We equip \(A_D\) with the
norm
\[
\|M\|_{A_D}^2
:=
\|M\|_{L^2(D)^3}^2+
\|\curl M\|_{L^2(D)^3}^2.
\]

The above topological assumptions are precisely what we need for the vector
potential representation used below. First, the Dirichlet harmonic field space
is trivial:
\begin{equation}
\mathcal H_D:=
\left\{
U\in H_0(\curl,D):
\curl U=0,\ \diver U=0
\right\}
=\{0\}.
\label{eq:harmonic-field-zero}
\end{equation}
Since every \(M\in A_D\) is divergence free, this yields \cite{Monk2003}
\begin{equation}
\|M\|_{A_D}
\leq C\|\curl M\|_{L^2(D)^3},
\qquad
M\in A_D.
\label{eq:Maxwell-estimate}
\end{equation}
Second, the same topology yields the uniqueness of vector 
potential \cite[Theorem~3.17]{ABDG1998}. Every \(J\in H(\diver,D)\) satisfying
\begin{equation}
\diver J=0\quad\text{in }D,
\qquad
J\cdot n=0\quad\text{on }\partial D
\label{eq:J-constraints}
\end{equation}
admits a unique potential \(M\in A_D\) such that
\begin{equation}
J=\curl M.
\label{eq:J-curl-M}
\end{equation}
Conversely, every \(M\in A_D\) gives \(J=\curl M\) satisfying
\eqref{eq:J-constraints}. Thus \(A_D\) is the natural space for the vector
potential.

Define the normal flux space
\begin{equation}
X_D:=\gamma_n(A_D)
=
\{M\cdot n:\ M\in A_D\},
\label{eq:XD-definition}
\end{equation}
equipped with the quotient norm
\begin{equation}
\|q\|_{X_D}
:=
\inf_{\substack{M\in A_D\\ M\cdot n=q}}
\|M\|_{A_D}.
\label{eq:XD-norm}
\end{equation}
Since \(X_D\) is isometrically isomorphic to the Hilbert quotient
\(A_D/\ker\gamma_n\), it is a Hilbert space. For a Lipschitz domain \(D\), one always has
a continuous embedding
\[
X_D\hookrightarrow H^{-1/2}(\partial D),
\]
but \(X_D\) does not need to have a simple Sobolev-space characterization.

\subsection{The eddy-current model for MIT}

Following the standard eddy-current formulation used in electromagnetic
induction imaging, see, for instance, Ammari et al.~\cite{AmmariChenChenVolkovWang2013},
the total fields \((E,H)\) in the presence of \(D\) solve the full-space problem
\begin{equation}
\left\{
\begin{aligned}
\curl E &=\ii\omega\mu H
&&\text{in }\R^3,\\
\curl H &=J_0+\sigma E
&&\text{in }\R^3,\\
E(x)&=O(|x|^{-1}),\qquad H(x)=O(|x|^{-1})
&&\text{as }|x|\to\infty.
\end{aligned}
\right.
\label{eq:full-eddy}
\end{equation}
Here \(J_0\) is a divergence-free source current with compact support
disjoint from \(\overline D\). The decay condition in \eqref{eq:full-eddy} fixes the physical
full-space solution. Equivalently, eliminating \(H\) gives
\[
\curl\mu^{-1}\curl E-\ii\omega\sigma E=\ii\omega J_0
\quad\text{in }\R^3,
\qquad
\diver E=0\quad\text{in }\R^3\setminus\overline D,
\]
together with \(E(x)=O(|x|^{-1})\) at infinity. 

Let \((E_0,H_0)\) be the background field obtained by setting \(\sigma=0\):
\[
\curl E_0=\ii\omega\mu H_0,\qquad
\curl H_0=J_0
\quad\text{in }\R^3,
\qquad
E_0,H_0=O(|x|^{-1}).
\]
Since \(J_0\) is supported outside \(D\), the incident magnetic field satisfies
\[
\curl H_0=0,\qquad \diver H_0=0
\quad\text{in a neighbourhood of }\overline D.
\]
By the simple connectedness of each \(D_j\), the incident magnetic field can be
written as
\[
H_0=\nabla h,\qquad \Delta h=0\quad\text{in }D.
\]
This is the incident-field class used in the measurement model below, where
fields are generated by magnetic dipole potentials on \(\GammaR\).

The induced current density is
\[
J:=\sigma E.
\]
Since \(\sigma=0\) outside \(D\), \(J\) vanishes identically on
\(\R^3\setminus\overline D\). From now on we regard it as a function supported
in \(D\). Taking the divergence of the second equation in \eqref{eq:full-eddy}
and using \(\diver J_0=0\) gives \eqref{eq:J-constraints}. By the
vector-potential result stated above, there is a unique \(M\in A_D\) such that
\[
J=\curl M.
\]

Let
\[
\Phi(x,y)=\frac{1}{4\pi |x-y|}
\]
be the fundamental solution of \(-\Delta\). By the Biot-Savart law, the
induced magnetic field generated by \(J\) is
\begin{equation}
H-H_0=\curl S_DJ,
\qquad
S_DJ(x):=\int_D\Phi(x,y)J(y)\,\dd y.
\label{eq:Biot-Savart-J}
\end{equation}
Here \(S_D\) is the Newtonian volume potential
\[
S_D:L^2(D)^3\to H^2_{\mathrm{loc}}(\R^3)^3,
\]
which satisfies \(-\Delta S_D u=u\) in \(D\) and \(\Delta S_D u=0\) in
\(\R^3\setminus\overline D\); the same symbol is used for its action on scalar
fields.
For \(J=\curl M\) with \(M\in A_D\), the boundary condition \(M\times n=0\)
gives
\[
S_DJ=S_D(\curl M)=\curl S_D M
\quad\text{in }\R^3\setminus\partial D,
\]
by integration by parts. Hence
\[
H-H_0=\curl\curl S_D M.
\]
Using \(E=\rho\curl M\) in \(D\), \(\curl E=\ii\omega\mu H\), and
\(H=H_0+\curl\curl S_D M\), we obtain
\begin{equation}
\frac{1}{\ii\omega\mu}
\curl\left(\rho\,\curl M\right)
-
\curl\curl S_D M
=
\nabla h
\quad\text{in }D,
\label{eq:M-strong}
\end{equation}
where
\[
\rho:=\sigma^{-1},
\qquad
S_D M(x):=\int_D\Phi(x,y)M(y)\,\dd y.
\]

\subsection{A boundary representation of the nonlocal term}

We record a useful identity for the nonlocal term. Recall the convention
\[
\Phi(x,y)=\frac{1}{4\pi |x-y|},
\qquad
-\Delta_x\Phi(x,y)=\delta_y(x).
\]
For \(M\in A_D\), set $q_M:=M\cdot n\in X_D$ and let
\begin{equation}
S_{\partial D}{q_M}(x):=
\int_{\partial D}\Phi(x,y)q_M(y)\,\dd S_y.
\label{eq:singlelayer}
\end{equation}
$S_{\partial D}$ is just the single layer potential operator. Then, for \(x\in D\),
\begin{equation}
\diver_x S_D M(x)=-S_{\partial D}{q_M}(x).
\label{eq:div-SM}
\end{equation}
Indeed, using \(\nabla_x\Phi(x,y)=-\nabla_y\Phi(x,y)\) and integration by parts,
\[
\begin{aligned}
\diver_x S_D M(x)
&=
\int_D \nabla_x\Phi(x,y)\cdot M(y)\,\dd y
\\
&=
-\int_D \nabla_y\Phi(x,y)\cdot M(y)\,\dd y
\\
&=
-\int_{\partial D}\Phi(x,y)\bigl(M(y)\cdot n(y)\bigr)\,\dd S_y
+
\int_D \Phi(x,y)\,\diver M(y)\,\dd y
\\
&=
-S_{\partial D}{q_M}(x),
\end{aligned}
\]
where we used \(\diver M=0\) in \(D\) in the last step. Moreover,
\[
\Delta_x S_D M(x)=-M(x)
\quad\text{in }D.
\]
Therefore,
\begin{equation}
\curl\curl S_D M
=
\nabla\diver S_D M-\Delta S_D M
=
-\nabla S_{\partial D}{q_M}+M
\quad\text{in }D.
\label{eq:curlcurl-SM}
\end{equation}

Let \(V\in A_D\), and set \(q_V:=V\cdot n\). Since \(V\times n=0\) on
\(\partial D\), integration by parts gives
\[
(\curl S_D M,\curl V)_D
=
(\curl\curl S_D M,V)_D.
\]
Using \eqref{eq:curlcurl-SM} and \(\diver V=0\), we obtain
\[
\begin{aligned}
(\curl S_D M,\curl V)_D
&=
(M,V)_D
-
(\nabla S_{\partial D}{q_M},V)_D
\\
&=
(M,V)_D
-
\int_{\partial D}S_{\partial D}{q_M}(x)\overline{q_V(x)}\,\dd S_x.
\end{aligned}
\]
Equivalently,
\begin{equation}
(\curl S_D M,\curl V)_D
=
(M,V)_D
-
\mathcal V_D(q_M,q_V),
\label{eq:nonlocal-identity}
\end{equation}
where
\begin{equation}
\mathcal V_D(q,r)
:=
\int_{\partial D}
\left(
\int_{\partial D}\Phi(x,y)q(y)\,\dd S_y
\right)
\overline{r(x)}\,\dd S_x.
\label{eq:single-layer-form}
\end{equation}
In particular,
\begin{equation}
(\curl S_D M,\curl M)_D
=
\|M\|_{L^2(D)}^2-\mathcal V_D(q_M,q_M)
\in \mathbb R.
\label{eq:nonlocal-real}
\end{equation}
Since \(\Phi(x,y)\) is real-valued, the single-layer form
\(\mathcal V_D(q,r)\) is Hermitian. Consequently \(\mathcal V_D(q,q)\) is real for any \(q\in X_D\).
For the coercivity argument below, only the reality of this quantity is needed.

\subsection{Weak formulation and well-posedness}

For \(M,V\in A_D\), define
\begin{equation}
a_D(M,V)
:=
\frac{1}{\ii\omega\mu}
\int_D
\rho\,\curl M\cdot \overline{\curl V}\,dx
-
\int_D
\curl S_D M\cdot \overline{\curl V}\,dx.
\label{eq:aD-original}
\end{equation}
Using \eqref{eq:nonlocal-identity}, this form can equivalently be written as
\begin{equation}
a_D(M,V)
=
\frac{1}{\ii\omega\mu}
(\rho\curl M,\curl V)_D
-
(M,V)_D
+
\mathcal V_D(M\cdot n,V\cdot n).
\label{eq:aD-boundary-representation}
\end{equation}
Here
\[
(u,v)_D:=\int_D u\cdot \overline v\,dx,
\]
and \(\mathcal V_D\) is the single-layer boundary form in
\eqref{eq:single-layer-form}. The integrals in
\eqref{eq:single-layer-form} are understood as the natural
\(H^{1/2}(\partial D)\)-\(H^{-1/2}(\partial D)\) duality pairing.

For the incident fields used below, let \(h\) be the magnetic scalar potential generated
by a magnetic dipole whose pole lies outside \(\overline D\). Then \(h\) is
harmonic in \(D\) and smooth in a neighbourhood of \(\overline D\). Its trace on
\(\partial D\) defines an anti-linear boundary functional
\[
\ell_h(q):=\int_{\partial D}h\,\overline q\,\dd S,
\qquad q\in X_D,
\]
which is bounded with respect to the quotient norm of \(X_D\). Indeed, for
any \(q\in X_D\) and any \(M_q\in A_D\) with \(M_q\cdot n=q\), Green's formula
and \(\diver M_q=0\) give
\[
\ell_h(q)
=
\int_{\partial D}h\,\overline{M_q\cdot n}\,\dd S
=
(\nabla h,M_q)_D.
\]
Hence
\[
|\ell_h(q)|
\leq
\|\nabla h\|_{L^2(D)^3}\|M_q\|_{A_D}.
\]
Taking the infimum over all such liftings \(M_q\) yields
\[
|\ell_h(q)|
\leq
\|\nabla h\|_{L^2(D)^3}\|q\|_{X_D}.
\]
Thus \(\ell_h\in X_D'\). Now let \(R_D:X_D\to X_D'\) denote the Riesz map
associated with the quotient Hilbert structure of \(X_D\), i.e.
\[
\langle R_D f,q\rangle_{X_D',X_D}=(f,q)_{X_D},
\qquad f,q\in X_D.
\]
By the Riesz representation theorem, there exists a unique
\(f=R_D^{-1}\ell_h\in X_D\) such that
\begin{equation}
\ell_h(q)=(f,q)_{X_D},
\qquad q\in X_D. \label{f_def}
\end{equation}
With this convention, for a given \(h\), the weak problem to \eqref{eq:M-strong} is to find
\(M_f\in A_D\) such that
\begin{equation}
a_D(M_f,V)=(f,V\cdot n)_{X_D},
\qquad
\forall V\in A_D,
\label{eq:weak-f}
\end{equation}
where $f\in X_D$ is related to $h$ by \eqref{f_def}.

\begin{proposition}[Well-posedness]
Assume that \(D\) is a bounded Lipschitz open set with finitely many connected
components, each of which is simply connected and has connected boundary, and
that \(0<\sigma_0\leq \sigma\leq \sigma_1<\infty\).
Then for every \(f\in X_D\), problem \eqref{eq:weak-f} admits a unique
solution \(M_f\in A_D\). Moreover,
\[
\|M_f\|_{A_D}
\leq C\|f\|_{X_D}.
\]
\end{proposition}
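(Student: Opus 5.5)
We apply the Lax--Milgram lemma on the Hilbert space \(A_D\) to the sesquilinear form \(a_D\). The plan has three steps: boundedness of \(a_D\), coercivity of \(a_D\) through its imaginary part, and boundedness of the right-hand side.

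\emph{Boundedness.} From \eqref{eq:aD-original}, the first term is bounded by \((\omega\mu\sigma_0)^{-1}\|\curl M\|\,\|\curl V\|\), since \(\rho\le\sigma_0^{-1}\). For the second term we use that \(\curl S_D\) is bounded on \(L^2(D)^3\). Indeed, \(S_D\) maps \(L^2(D)^3\) continuously into \(H^2_{\mathrm{loc}}(\R^3)^3\), so \(\|\curl S_D M\|_{L^2(D)}\le C\|M\|_{L^2(D)}\). Hence \(|a_D(M,V)|\le C\|M\|_{A_D}\|V\|_{A_D}\).

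\emph{Coercivity.} Take \(V=M\) in \eqref{eq:aD-original}. By \eqref{eq:nonlocal-real}, the nonlocal term \((\curl S_D M,\curl M)_D\) is real. Since \(1/(\ii\omega\mu)=-\ii/(\omega\mu)\), this gives
\[
\im a_D(M,M)=-\frac{1}{\omega\mu}\int_D\rho|\curl M|^2\,dx .
\]
Using \(\rho\ge\sigma_1^{-1}\) together with the Friedrichs-type estimate \eqref{eq:Maxwell-estimate}, we obtain
\[
|a_D(M,M)|\ge|\im a_D(M,M)|\ge \frac{1}{\omega\mu\sigma_1}\|\curl M\|^2_{L^2(D)}\ge \frac{1}{C^2\omega\mu\sigma_1}\|M\|_{A_D}^2 .
\]
So \(a_D\) is coercive in the complex sense, which is the form Lax--Milgram requires. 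Equivalently, \(\ii a_D\) has a coercive real part. This is where the topological assumptions enter: \eqref{eq:Maxwell-estimate} relies on \eqref{eq:harmonic-field-zero}. We also need \(A_D\) to be a closed subspace of \(H_0(\curl,D)\), hence a Hilbert space. On \(A_D\) the divergence vanishes, so the graph norm of \(H_0(\curl,D)\cap H(\diver,D)\) reduces to the \(A_D\) norm, and closedness follows from continuity of the divergence and of the tangential trace.

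\emph{Right-hand side.} The map \(V\mapsto (f,V\cdot n)_{X_D}\) is anti-linear. It is bounded by \(\|f\|_{X_D}\|V\cdot n\|_{X_D}\le\|f\|_{X_D}\|V\|_{A_D}\), because the quotient norm \eqref{eq:XD-norm} gives \(\|V\cdot n\|_{X_D}\le\|V\|_{A_D}\).

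Lax--Milgram then yields a unique \(M_f\in A_D\) solving \eqref{eq:weak-f}, with \(\|M_f\|_{A_D}\le C^2\omega\mu\sigma_1\|f\|_{X_D}\). None of these steps is a serious obstacle. The only point needing care is the coercivity step, which hinges entirely on the reality of the nonlocal term from \eqref{eq:nonlocal-real} and on \eqref{eq:Maxwell-estimate}; both are already available in the paper.
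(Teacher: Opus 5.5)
Your proposal is correct and follows the paper's proof: boundedness of $a_D$, coercivity from the reality of the nonlocal term \eqref{eq:nonlocal-real} combined with the Maxwell estimate \eqref{eq:Maxwell-estimate}, a bounded right-hand side, and then Lax--Milgram. The paper states coercivity as $\re(\ii a_D(M,M))\ge c\|M\|_{A_D}^2$, which is your bound on $|\im a_D(M,M)|$ after rotation, and your check that $A_D$ is closed fills in a small detail the paper leaves implicit.
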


\begin{proof}
The boundedness of \(a_D\) follows from the boundedness of \(\rho\), the
continuity of \(S_D\), and the continuity of the normal trace
\[
A_D\ni M\mapsto M\cdot n\in X_D.
\]
Alternatively, using \eqref{eq:aD-boundary-representation}, the boundedness of
the single-layer form \(\mathcal V_D\) on \(X_D\times X_D\) follows from the
continuous embedding \(X_D\hookrightarrow H^{-1/2}(\partial D)\) and the
mapping property of the Laplace single-layer operator.

We next prove coercivity after a rotation. For \(M\in A_D\), by
\eqref{eq:nonlocal-real},
\[
(\curl S_D M,\curl M)_D\in\mathbb R.
\]
Therefore
\[
\begin{aligned}
\operatorname{Re}\bigl(\ii a_D(M,M)\bigr)
&=
\operatorname{Re}
\left[
\frac{1}{\omega\mu}
\int_D \rho|\curl M|^2\,dx
-
\ii(\curl S_D M,\curl M)_D
\right]
\\
&=
\frac{1}{\omega\mu}
\int_D \rho|\curl M|^2\,dx
\\
&\geq
\frac{1}{\omega\mu\sigma_1}
\|\curl M\|_{L^2(D)}^2.
\end{aligned}
\]
By the Maxwell estimate on \(A_D\), using \(\mathcal H_D=\{0\}\),
\[
\|M\|_{A_D}\leq C\|\curl M\|_{L^2(D)}.
\]
Hence
\[
\operatorname{Re}\bigl(\ii a_D(M,M)\bigr)
\geq c\|M\|_{A_D}^2.
\]
The rotated form
\[
\widetilde a_D(M,V):=\ii a_D(M,V)
\]
is coercive on \(A_D\).

The right-hand side
\[
V\mapsto \ii(f,V\cdot n)_{X_D}
\]
is bounded on \(A_D\), because
\[
|(f,V\cdot n)_{X_D}|
\leq
\|f\|_{X_D}\|V\cdot n\|_{X_D}
\leq
\|f\|_{X_D}\|V\|_{A_D}.
\]
The Lax--Milgram theorem applied to \(\widetilde a_D\) gives a unique
\(M_f\in A_D\) satisfying
\[
\ii a_D(M_f,V)
=
\ii(f,V\cdot n)_{X_D},
\qquad
\forall V\in A_D.
\]
Equivalently, \(M_f\) solves \eqref{eq:weak-f}. The stability estimate
follows from coercivity and boundedness.
\end{proof}

\section{Factorization of the near-field operator} \label{sect3}

In this section, we construct the magnetic-dipole near-field operator and prove
the factorization range identity. Throughout, the data are measured on \(\GammaR=\partial B_R\), where $B_R$ denotes a sphere of radius R. We denote the measurement space by
\[
Y:=L^2(\GammaR)^3.
\]
Two adjoints of the data operator appear. The
symbol \(G_D^*\) denotes the \emph{anti-dual adjoint}
\[
G_D^*:Y\to X_D',
\]
which represents the boundary functional induced by the free-space 
magnetic scalar potential. The symbol \(G_D^\dagger\) denotes the \emph{Hilbert-space adjoint}
\[
G_D^\dagger:Y\to X_D
\]
with respect to the quotient Hilbert structure of \(X_D\). These two adjoints
are related by the Riesz map on \(X_D\).

\subsection{Data operator and magnetic dipole sources}

For \(q\in X_D\), using the Biot-Savart law \eqref{eq:Biot-Savart-J} and the definition of single layer potential \eqref{eq:singlelayer}, 
the corresponding induced magnetic field is
\[
H_s(x)=-\nabla S_{\partial D}  q(x),\quad x\in \R^3\setminus\overline D.
\]
We define the data operator into the measurement space \(Y\) by
\begin{equation}
G_D:X_D\to Y,
\qquad
G_Dq:=-\nabla S_{\partial D} q|_{\GammaR}.
\label{eq:GD}
\end{equation}
Since \(\operatorname{dist}(\partial D,\GammaR)>0\), the kernel is smooth on
\(\GammaR\times \partial D\). Together with the continuous embedding
\(X_D\hookrightarrow H^{-1/2}(\partial D)\), this implies that
\(G_D:X_D\to Y\) is bounded and compact.

Let \(m\in Y\) be a magnetic dipole density on \(\GammaR\). It generates in
\(D\) the free-space magnetic scalar potential
\begin{equation}
h_m(y):=
\int_{\GammaR} m(x)\cdot \nabla_y\Phi(x,y)\,\dd S_x,
\qquad y\in D.
\label{eq:hm}
\end{equation}
Then \(h_m\) is harmonic in a neighbourhood of \(\overline D\). For
\(q\in X_D\), using the convention that inner products are linear in the first
argument, we compute
\[
\begin{aligned}
(m,G_Dq)_Y
&=
-\int_{\GammaR}
 m(x)\cdot
 \overline{\nabla_x\int_{\partial D}\Phi(x,y)q(y)\,\dd S_y}
\,\dd S_x
\\
&=
-\int_{\partial D}
\left[
\int_{\GammaR}m(x)\cdot \nabla_x\Phi(x,y)\,\dd S_x
\right]\overline{q(y)}\,\dd S_y
\\
&=
\int_{\partial D}
\left[
\int_{\GammaR}m(x)\cdot \nabla_y\Phi(x,y)\,\dd S_x
\right]\overline{q(y)}\,\dd S_y
\\
&=
\int_{\partial D} h_m(y)\overline{q(y)}\,\dd S_y.
\end{aligned}
\]
This identity shows that the anti-dual adjoint of \(G_D\) is the operator
\begin{equation}
G_D^*:Y\to X_D',
\qquad
\langle G_D^*m,q\rangle_{X_D',X_D}:=(m,G_Dq)_Y,
\label{eq:dual-adjoint-G}
\end{equation}
and that
\begin{equation}
G_D^*m=\ell_{h_m},
\qquad
\ell_{h_m}(q):=\int_{\partial D}h_m\overline q\,\dd S.
\label{eq:GD-star-physical}
\end{equation}
Thus \(G_D^*m\) is the boundary functional induced by the physical 
free-space magnetic scalar potential \(h_m\). It should not be confused with
the Hilbert adjoint of \(G_D\).

Let
\[
R_D:X_D\to X_D'
\]
be the Riesz map of the Hilbert space \(X_D\), i.e.
\[
\langle R_D f,q\rangle_{X_D',X_D}=(f,q)_{X_D},
\qquad f,q\in X_D.
\]
The Hilbert adjoint \(G_D^\dagger:Y\to X_D\) is characterized by
\begin{equation}
(G_D^\dagger m,q)_{X_D}=(m,G_Dq)_Y,
\qquad m\in Y,
\ q\in X_D.
\label{eq:Hilbert-adjoint-G}
\end{equation}
Comparing \eqref{eq:dual-adjoint-G} and \eqref{eq:Hilbert-adjoint-G}, we obtain
\begin{equation}
G_D^*=R_DG_D^\dagger.
\label{eq:dual-Hilbert-adjoint-relation}
\end{equation}
Equivalently,
\[
G_D^\dagger m=R_D^{-1}\ell_{h_m}.
\]
This element is the \(X_D\)-Riesz representative of the physical boundary
functional \(\ell_{h_m}\). It should not be identified with the function
\(h_m|_{\partial D}\) itself.

\subsection{Boundary response operators}

We first define the physical boundary response operator with its natural
input space. For \(\ell\in X_D'\), let \(M_\ell\in A_D\) be the unique solution
of
\begin{equation}
a_D(M_\ell,V)=\langle \ell,V\cdot n\rangle_{X_D',X_D},
\qquad
\forall V\in A_D.
\label{eq:weak-ell-factorization}
\end{equation}
Then define
\begin{equation}
B_D:X_D'\to X_D,
\qquad
B_D\ell:=M_\ell\cdot n.
\label{eq:BD-dual}
\end{equation}
The boundedness of \(B_D\) follows from the well-posedness of the variational
problem.

For a magnetic dipole density \(m\), the input functional is precisely
\(G_D^*m=\ell_{h_m}\). Since
\[
\langle \ell_{h_m},V\cdot n\rangle
=
\int_{\partial D}h_m\overline{V\cdot n}\,\dd S
=
(\nabla h_m,V)_D,
\qquad V\in A_D,
\]
where \(\diver V=0\) has been used, the solution \(M_{G_D^*m}\) of
\eqref{eq:weak-ell-factorization} is the current vector potential
corresponding to the incident magnetic field \(H_0=\nabla h_m\). Its boundary
flux is
\[
B_DG_D^*m=M_{G_D^*m}\cdot n,
\]
and the measured induced magnetic field is obtained by applying \(G_D\).
Consequently, the physical magnetic-dipole near-field operator is
\begin{equation}
F_D:Y\to Y,
\qquad
F_D=G_DB_DG_D^*.
\label{eq:F-factorization}
\end{equation}
The spaces in this factorization are
\[
Y\xrightarrow{\,G_D^*\,}X_D'
\xrightarrow{\,B_D\,}X_D
\xrightarrow{\,G_D\,}Y.
\]

To apply a Hilbert-space range theorem, we now use the Riesz map to rewrite the
middle response as an operator on \(X_D\). Define
\begin{equation}
\mathcal B_D:=B_DR_D:X_D\to X_D.
\label{eq:BD-hilbertized}
\end{equation}
Equivalently, for \(f\in X_D\), \(\mathcal B_Df=M_f\cdot n\), where
\(M_f\in A_D\) is the solution of \eqref{eq:weak-f}.
Using \eqref{eq:dual-Hilbert-adjoint-relation}, the physical factorization
\eqref{eq:F-factorization} becomes
\begin{equation}
F_D=G_D\mathcal B_DG_D^\dagger.
\label{eq:F-Hilbert-factorization}
\end{equation}
Here \(G_D^\dagger\) is the Hilbert adjoint of \(G_D\).

\subsection{Coercivity of the Riesz-lifted response}

For an operator \(T:X_D\to X_D\), we use the convention
\[
\im T:=\frac{T-T^\dagger}{2\ii},
\]
so that, since our inner products are linear in the first argument,
\[
\begin{aligned}
(f,(\im T)f)_{X_D}=&-\frac1{2\ii}\left((f,Tf)_{X_D}-(f,T^\dagger f)_{X_D}\right)\\
=&-\im(f,Tf)_{X_D}.
\end{aligned}
\]
Define
\begin{equation}
P_D:=\im\mathcal B_D:X_D\to X_D.
\label{eq:PD-definition}
\end{equation}

\begin{lemma}[Coercivity of the dissipative part]
The operator \(P_D=\im\mathcal B_D\) is bounded, self-adjoint, positive, and
coercive on \(X_D\). More precisely, there is a constant \(c>0\) such that
\begin{equation}
(f,P_Df)_{X_D}
\ge c\|f\|_{X_D}^2,
\qquad
\forall f\in X_D.
\label{eq:PD-coercive}
\end{equation}
\end{lemma}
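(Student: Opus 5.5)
Boundedness and self-adjointness are immediate. By the well-posedness proposition, \(f\mapsto M_f\) is bounded from \(X_D\) to \(A_D\), and the normal trace \(A_D\to X_D\) has norm at most one for the quotient norm. Hence \(\mathcal B_D\) is bounded, so \(\mathcal B_D^\dagger\) and \(P_D=(\mathcal B_D-\mathcal B_D^\dagger)/(2\ii)\) are bounded as well, and \(P_D^\dagger=P_D\) by construction. Everything therefore reduces to the lower bound \eqref{eq:PD-coercive}. By the convention recorded just before \eqref{eq:PD-definition}, this amounts to estimating \(-\im(f,\mathcal B_Df)_{X_D}\) from below.

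The key step is to express this quantity through the sesquilinear form \(a_D\). I would test \eqref{eq:weak-f} with \(V=M_f\). Since \(M_f\cdot n=\mathcal B_Df\), this gives
\[
a_D(M_f,M_f)=(f,\mathcal B_Df)_{X_D}.
\]
In the proof of the well-posedness proposition we showed
\[
\re\bigl(\ii a_D(M,M)\bigr)=\frac{1}{\omega\mu}\int_D\rho|\curl M|^2\,dx,
\]
using the reality of \((\curl S_DM,\curl M)_D\) from \eqref{eq:nonlocal-real}. Since \(\re(\ii z)=-\im z\), it follows that
\[
(f,P_Df)_{X_D}=-\im(f,\mathcal B_Df)_{X_D}=\frac{1}{\omega\mu}\int_D\rho|\curl M_f|^2\,dx\ \ge\ \frac{1}{\omega\mu\sigma_1}\|\curl M_f\|^2_{L^2(D)}.
\]
The Maxwell estimate \eqref{eq:Maxwell-estimate} then upgrades this to \(\ge c\|M_f\|_{A_D}^2\). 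This already gives positivity.

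The main obstacle is bounding \(\|M_f\|_{A_D}\) from below by \(\|f\|_{X_D}\); this is an inverse of the stability estimate and does not follow from it. I would again use the equation, now testing with an arbitrary \(V\in A_D\) satisfying \(V\cdot n=f\). Then
\[
\|f\|_{X_D}^2=(f,V\cdot n)_{X_D}=a_D(M_f,V)\le C_a\|M_f\|_{A_D}\|V\|_{A_D},
\]
where \(C_a\) is the continuity constant of \(a_D\). Taking the infimum over such liftings \(V\) and using the definition \eqref{eq:XD-norm} yields
\[
\|f\|_{X_D}^2\le C_a\|M_f\|_{A_D}\|f\|_{X_D},
\qquad\text{so}\qquad
\|f\|_{X_D}\le C_a\|M_f\|_{A_D}.
\]
Combining the two estimates gives
\[
(f,P_Df)_{X_D}\ge c\,C_a^{-2}\|f\|^2_{X_D},
\]
which is \eqref{eq:PD-coercive}. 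The only point to check carefully is that the sign convention for \(\im T\) matches \(\re(\ii a)=-\im a\), so that the dissipative term enters with a positive sign.
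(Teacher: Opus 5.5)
Your proposal is correct and follows essentially the same route as the paper. You test \eqref{eq:weak-f} with $V=M_f$, use the reality of the nonlocal part to get $(f,P_Df)_{X_D}=\frac{1}{\omega\mu}\int_D\rho|\curl M_f|^2\,dx$, apply the Maxwell estimate, and recover $\|f\|_{X_D}\le C\|M_f\|_{A_D}$ from the weak equation together with the quotient-norm infimum. The differences are only cosmetic: you lift $f$ itself rather than an arbitrary $q$ followed by a supremum, and you spell out the boundedness of $\mathcal B_D$, which the paper leaves implicit.
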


\begin{proof}
Let \(M=M_f\), where \(M_f\) solves \eqref{eq:weak-f}. Since
\(\mathcal B_Df=M\cdot n\), taking \(V=M\) in \eqref{eq:weak-f} gives
\[
(f,\mathcal B_Df)_{X_D}=a_D(M,M).
\]
By \eqref{eq:aD-boundary-representation},
\[
a_D(M,M)
=
-\frac{\ii}{\omega\mu}
\int_D \rho|\curl M|^2\,dx
-
\|M\|_{L^2(D)}^2
+
\mathcal V_D(M\cdot n,M\cdot n).
\]
The last two terms are real. Therefore
\begin{equation}
(f,P_Df)_{X_D}
=
-\im(f,\mathcal B_Df)_{X_D}
=
\frac{1}{\omega\mu}
\int_D\rho|\curl M|^2\,dx.
\label{eq:PD-energy-identity}
\end{equation}
Using \(\rho\ge \sigma_1^{-1}\) and the Maxwell estimate
\eqref{eq:Maxwell-estimate}, we obtain
\[
(f,P_Df)_{X_D}
\ge c_0\|M\|_{A_D}^2.
\]
It remains to control \(\|f\|_{X_D}\) by \(\|M\|_{A_D}\). From the weak problem
and the boundedness of \(a_D\),
\[
|(f,V\cdot n)_{X_D}|
=
|a_D(M,V)|
\le C\|M\|_{A_D}\|V\|_{A_D},
\qquad
\forall V\in A_D.
\]
Taking the infimum over all \(V\in A_D\) with \(V\cdot n=q\) gives
\[
|(f,q)_{X_D}|
\le C\|M\|_{A_D}\|q\|_{X_D},
\qquad
\forall q\in X_D.
\]
Hence
\[
\|f\|_{X_D}
=
\sup_{q\ne0}\frac{|(f,q)_{X_D}|}{\|q\|_{X_D}}
\le C\|M\|_{A_D}.
\]
Combining this with \eqref{eq:PD-energy-identity} proves
\eqref{eq:PD-coercive}. Since \(P_D=\im\mathcal B_D\) is self-adjoint by
definition, it is a bounded, positive, coercive operator on \(X_D\).
\end{proof}

\subsection{The range identity}

From \eqref{eq:F-Hilbert-factorization} and the definition of \(P_D\), taking
imaginary parts in Hilbert space \(Y\) gives
\begin{equation}
\im F_D=G_D P_D G_D^\dagger.
\label{eq:positive-factorization}
\end{equation}
Thus \(\im F_D\) is semi-positive. Indeed, since \(F_D=G_D\mathcal B_DG_D^\dagger\), we have
\[
F_D^\dagger=G_D\mathcal B_D^\dagger G_D^\dagger,
\]
which yields
\[
\im F_D
=
G_D\left(\frac{\mathcal B_D-\mathcal B_D^\dagger}{2\ii}\right)G_D^\dagger
=
G_D(\im\mathcal B_D)G_D^\dagger
=
G_D P_DG_D^\dagger.
\]
Since \(P_D\ge cI\), the square root \(P_D^{1/2}:X_D\to X_D\) is boundedly
invertible. Set
\[
L_D:=G_DP_D^{1/2}:X_D\to Y.
\]
Because \(G_D^\dagger\) is the Hilbert adjoint of \(G_D\) and
\(P_D^{1/2}\) is self-adjoint,
\[
L_D^\dagger=(G_DP_D^{1/2})^\dagger=P_D^{1/2}G_D^\dagger.
\]
Therefore \eqref{eq:positive-factorization} gives
\[
\im F_D=L_DL_D^\dagger.
\]
By Douglas' range theorem \cite{Douglas1966}, for every bounded operator
\(L:X\to Y\) between Hilbert spaces,
\[
\ran\bigl((LL^\dagger)^{1/2}\bigr)=\ran(L).
\]
Applying this to \(L=L_D\), we find
\[
\ran\bigl((\im F_D)^{1/2}\bigr)
=
\ran(L_D).
\]
Finally, since \(P_D^{1/2}\) is an isomorphism of \(X_D\) onto itself,
\[
\ran(L_D)
=
\ran(G_DP_D^{1/2})
=
\ran(G_D).
\]
We have proved the following result.

\begin{theorem}[Factorization range identity]
Let \(D\) be a bounded Lipschitz open set with finitely many connected
components, each of which is simply connected and has connected boundary, and
let \(F_D\) be the magnetic-dipole near-field operator defined by
\eqref{eq:F-factorization}. Then
\begin{equation}
\ran\bigl((\im F_D)^{1/2}\bigr)=\ran(G_D).
\label{eq:range-identity}
\end{equation}
\end{theorem}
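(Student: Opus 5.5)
The plan is to assemble the theorem from the pieces already established in Section~3, taking care over the two distinct adjoints. First I would rewrite the physical factorization \eqref{eq:F-factorization} purely in Hilbert-space terms. The relation $G_D^*=R_DG_D^\dagger$ from \eqref{eq:dual-Hilbert-adjoint-relation} and the definition $\mathcal B_D=B_DR_D$ together give $F_D=G_D\mathcal B_DG_D^\dagger$. I would check that $\mathcal B_D f=M_f\cdot n$ with $M_f$ solving \eqref{eq:weak-f}; this holds because $R_Df$ is exactly the functional $q\mapsto(f,q)_{X_D}$.

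Next I would take imaginary parts in $Y$. Since $F_D^\dagger=G_D\mathcal B_D^\dagger G_D^\dagger$, this yields $\im F_D=G_DP_DG_D^\dagger$ with $P_D=\im\mathcal B_D$. The substantive input here is the coercivity lemma, namely $(f,P_Df)_{X_D}\ge c\|f\|_{X_D}^2$. I would take that lemma as given; it rests on the energy identity \eqref{eq:PD-energy-identity}, the Maxwell estimate \eqref{eq:Maxwell-estimate}, and the bound $\|f\|_{X_D}\le C\|M_f\|_{A_D}$.

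Coercivity gives that $P_D$ is self-adjoint with spectrum in $[c,\|P_D\|]$. Its square root $P_D^{1/2}$ is then self-adjoint, bounded, and boundedly invertible. Setting $L_D:=G_DP_D^{1/2}$, we get $L_D^\dagger=P_D^{1/2}G_D^\dagger$ and hence $\im F_D=L_DL_D^\dagger\ge0$, so $(\im F_D)^{1/2}$ is well defined.

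I would then apply Douglas' theorem. Both $(\im F_D)^{1/2}$ and $L_D$ satisfy $TT^\dagger=\im F_D$, so their ranges coincide. If one wants to avoid quoting it, this follows by a polar decomposition: $L_D=(L_DL_D^\dagger)^{1/2}U$ with a partial isometry $U$, and conversely $(L_DL_D^\dagger)^{1/2}=L_DU^\dagger$ restricted appropriately. Finally, $\ran(L_D)=\ran(G_D)$ because $P_D^{1/2}$ maps $X_D$ onto itself.

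The main obstacle is conceptual rather than computational. Douglas' theorem must be applied to a factorization written with the Hilbert adjoint $G_D^\dagger$, not the anti-dual adjoint $G_D^*$. The coercivity must also be measured in the same quotient inner product on $X_D$ that defines $G_D^\dagger$. This is why the Riesz-lifted operator $\mathcal B_D$ is used; once that is consistent, the remaining steps are routine operator theory.
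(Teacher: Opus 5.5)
Your proposal is correct and follows the paper's proof step for step. Both use the Riesz-lifted factorization $F_D=G_D\mathcal B_DG_D^\dagger$ and then $\im F_D=G_DP_DG_D^\dagger$ with $P_D$ coercive. Both set $L_D=G_DP_D^{1/2}$, so that $\im F_D=L_DL_D^\dagger$, and apply Douglas' theorem. Finally, both use the invertibility of $P_D^{1/2}$ to pass from $\ran(L_D)$ to $\ran(G_D)$.

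Your polar-decomposition remark simply gives a self-contained proof of the special case of Douglas' theorem that the paper cites. Two small corrections:
\begin{itemize}
\item The identity $(L_DL_D^\dagger)^{1/2}=L_DU^\dagger$ holds exactly, so no restriction is needed.
\item Self-adjointness of $P_D$ comes from its definition as an imaginary part, not from coercivity.
\end{itemize}
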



\section{Support uniqueness}\label{sect4}

This section proves that, when each connected component of $D$
is either of class $C^{1,1}$ or convex, the inclusion $D$ is uniquely determined by the
magnetic-dipole near-field operator $F_D$. 
The proof rests on two analytic properties that we isolate as (P1)--(P2) below,
which both geometric classes are shown to satisfy. For the pointwise
characterization we also assume that $B_R\setminus\overline D$ is connected.
Let $D_1,\ldots,D_N$ be the connected components of $D$.

\subsection{Two regularity properties and the admissible geometries}

In addition to the topological assumptions of Section~2, we further assume
throughout this section that $D$ has the following two properties.
\begin{enumerate}[label=(P\arabic*),leftmargin=*]
\item Maxwell regularity: $A_D\subset H^1(D)^3$, with continuous embedding.
\item $H^2$ Dirichlet regularity: whenever $g$ is the trace on $\partial D$ of a
function smooth in a neighbourhood of $D$, the solution $u\in H^1(D)$ of
$\Delta u=0$ in $D$, $u=g$ on $\partial D$, belongs to $H^2(D)$.
\end{enumerate}

\begin{lemma}[Admissible geometries]\label{lem:admissible}
Suppose each connected component of $D$ is either of class $C^{1,1}$ or convex.
Then $D$ satisfies (P1) and (P2).
\end{lemma}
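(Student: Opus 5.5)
The plan is to reduce both properties to a single connected component and then quote the classical regularity results for $C^{1,1}$ and convex domains. Since the components $D_1,\ldots,D_N$ have positive mutual distance, $A_D$ splits as $\bigoplus_j A_{D_j}$: a field $M\in A_D$ restricted to $D_j$ still has vanishing tangential trace and zero divergence, and the $A_D$-norm is the $\ell^2$-sum of the component norms. Likewise a harmonic function with Dirichlet data on $\partial D$ is harmonic on each $D_j$ separately. Both (P1) and (P2) are therefore componentwise statements, and it suffices to treat a single $D_j$ that is either $C^{1,1}$ or convex. A bounded convex open set is automatically Lipschitz, so the standing assumptions of Section~2 still apply.

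For (P1), I will invoke the embedding of $H_0(\curl,\Omega)\cap H(\diver,\Omega)$ into $H^1(\Omega)^3$. This embedding holds when $\Omega$ is of class $C^{1,1}$ or convex (Amrouche--Bernardi--Dauge--Girault \cite{ABDG1998}, Theorems~2.12 and~2.17; see also Girault--Raviart). It comes with the estimate
\[
\|u\|_{H^1(\Omega)}\le C\left(\|u\|_{L^2}+\|\curl u\|_{L^2}+\|\diver u\|_{L^2}\right).
\]
For $M\in A_{D_j}$ the divergence vanishes, so $\|M\|_{H^1(D_j)}\le C\|M\|_{A_{D_j}}$. Summing over $j$ gives the continuous embedding $A_D\hookrightarrow H^1(D)^3$.

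For (P2), let $g=G|_{\partial D}$, where $G$ is smooth near $\overline D$. I write $u=G+w$, where $w\in H^1_0(D)$ solves $-\Delta w=\Delta G\in L^2(D)$. The $H^2$ regularity of $w$ then follows from the standard Dirichlet regularity theorems. For $C^{1,1}$ domains this is Grisvard's Theorem~2.4.2.5 (or Gilbarg--Trudinger). For convex domains it is Grisvard's Theorem~3.2.1.2 (Kadlec's theorem), where convexity substitutes for boundary curvature control through the sign of the second fundamental form in the integration-by-parts identity. Since $G\in H^2(D)$, it follows that $u\in H^2(D)$.

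The main obstacle, to the extent there is one, is the convex case of (P1). Its proof approximates $D_j$ by smooth convex domains and uses the boundary identity $\int_{\partial\Omega}\bigl(\text{curvature terms}\bigr)\,|u|^2\ge0$ with uniform constants. I will not redo that argument. Instead I will cite it precisely and only check that its hypotheses are met: $M\times n=0$ matches the tangential boundary condition in those theorems, and $\diver M=0$ trivially lies in $L^2$. The componentwise reduction and the lifting argument in (P2) are routine.
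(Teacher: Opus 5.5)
Your proposal is correct and follows essentially the same route as the paper: reduce to a single component, then cite \cite{ABDG1998}, Theorems~2.12 and~2.17, for (P1), and Grisvard's Dirichlet $H^2$-regularity results for (P2), namely the $C^{1,1}$ theorem and Kadlec's Theorem~3.2.1.2 for convex domains. Your explicit lifting $u=G+w$, with $w\in H^1_0(D)$ and $-\Delta w=\Delta G\in L^2(D)$, spells out a step that the paper leaves implicit when it invokes the ``elliptic shift estimate''; that you cite Theorem~2.4.2.5 where the paper cites Theorem~2.3.3.2 for the $C^{1,1}$ case is immaterial.
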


\begin{proof}
Both properties can be verified componentwise. Fix a connected component $\Omega$ of $D$.
For (P1), the embedding $H_0(\curl,\Omega)\cap H(\diver,\Omega)\subset
H^1(\Omega)^3$ holds for $C^{1,1}$ domains by \cite[Theorem~2.12]{ABDG1998}
and for convex domains by \cite[Theorem~2.17]{ABDG1998}; restricting to
divergence-free fields gives $A_\Omega\subset H^1(\Omega)^3$.
For (P2), the elliptic shift estimate is classical for $C^{1,1}$ domains
\cite[Theorem~2.3.3.2]{Grisvard1985} and holds for convex domains by
\cite[Theorem~3.2.1.2]{Grisvard1985}.
\end{proof}

\begin{remark}
The two classes cover different geometries, and neither contains the other. $C^{1,1}$
regularity allows non-convex but smooth inclusions, whereas convexity allows
inclusions with edges and corners, such as the cubes used in Section~5.
\end{remark}

\subsection{The flux space}

Under (P1) the abstract flux space $X_D$ of Section~2 admits a concrete
description. Recall that elements of $A_D$ have vanishing tangential trace,
so their boundary trace is carried entirely by the normal component.

\begin{lemma}[Identification of the flux space]\label{lem:XD-identification}
Assume (P1). Then, with equivalent norms,
\begin{equation}
X_D
=
\left\{
q : q n\in H^{1/2}(\partial D)^3,\;
\int_{\partial D_j} q\,\dd S=0,\;
j=1,\ldots,N
\right\},
\label{eq:XD-general}
\end{equation}
where $q n$ denotes the vector field with value $q(x)n(x)$ on $\partial D$ and
$n$ is the outward unit normal.
\end{lemma}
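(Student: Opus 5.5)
We prove the two inclusions separately, writing $Z$ for the right-hand side of \eqref{eq:XD-general}, normed by $\|q\|_Z:=\|qn\|_{H^{1/2}(\partial D)^3}$.

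\emph{Step 1: $X_D\subset Z$ continuously.} Let $q=M\cdot n$ with $M\in A_D$. By (P1), $M\in H^1(D)^3$ with $\|M\|_{H^1}\le C\|M\|_{A_D}$, so the full trace $M|_{\partial D}$ lies in $H^{1/2}(\partial D)^3$. Since $M\times n=0$, this trace is purely normal, so $M|_{\partial D}=(M\cdot n)n=qn$. Hence
\[
\|qn\|_{H^{1/2}}\le C\|M\|_{A_D}.
\]
Taking the infimum over all liftings gives $\|q\|_Z\le C\|q\|_{X_D}$. For the flux condition, apply the divergence theorem on each component $D_j$ (with test function $1$): $\int_{\partial D_j}q\,\dd S=\int_{D_j}\diver M\,\dd x=0$. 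Here we use that $\partial D_j$ is connected, so $\partial D_j$ is exactly the boundary of the component.

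\emph{Step 2: $Z\subset X_D$ with a bounded lifting.} Given $q\in Z$, we work componentwise. By the $H^{1/2}$ trace theorem, extend $g:=qn$ to some $W\in H^1(D_j)^3$ with $\|W\|_{H^1}\le C\|qn\|_{H^{1/2}}$. Then $W\times n=0$ and $W\cdot n=q$ on $\partial D_j$. We must remove the divergence. Since $\int_{D_j}\diver W=\int_{\partial D_j}q=0$, we can solve the Bogovskii problem: find $Z_0\in H^1_0(D_j)^3$ with
\[
\diver Z_0=\diver W,\qquad \|Z_0\|_{H^1}\le C\|\diver W\|_{L^2}.
\]
This is valid on Lipschitz domains, and both $C^{1,1}$ and convex domains qualify. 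Set $M:=W-Z_0$. Then $M\in H^1$, $\diver M=0$, and the full trace of $M$ equals $qn$, so $M\times n=0$ and $M\cdot n=q$. Thus $M\in A_D$ with
\[
\|M\|_{A_D}\le C\|M\|_{H^1}\le C\|q\|_Z,
\]
and therefore $\|q\|_{X_D}\le C\|q\|_Z$.

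\emph{Main obstacle.} The delicate point is the meaning of ``$qn\in H^{1/2}(\partial D)^3$'' on convex (merely Lipschitz) boundaries, where $n$ is only $L^\infty$ and may jump across edges. In that setting $q$ itself need not lie in $H^{1/2}$, and the membership is a compatibility condition across edges. The plan treats $qn$ as the primary object. The trace map $H^1(D)^3\to H^{1/2}(\partial D)^3$ is surjective on Lipschitz domains, which is what Step 2 uses; the extension $W$ then has a purely normal trace by construction. The Bogovskii correction preserves the trace because $Z_0\in H^1_0$. With these two facts, both directions go through without any further regularity of $n$. Equivalence of norms then follows from the two bounds, or alternatively from the open mapping theorem once both spaces are known to be Banach.
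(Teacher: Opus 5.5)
Your proof is correct and follows the paper's route. For $X_D\subset Z$ you use (P1) together with the purely normal trace $M|_{\partial D}=qn$, then take the infimum over liftings. For the converse you extend $qn$ by the trace theorem and subtract a Bogovskii field in $H^1_0$, which removes the divergence without changing the trace. The paper does the same, except that it builds the lifting on all of $D$ at once rather than componentwise, which makes no difference.

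One small correction: you do not need connectedness of $\partial D_j$ for the flux condition. The divergence theorem on $D_j$ gives $\int_{\partial D_j}q\,\dd S=\int_{D_j}\diver M\,\dd x=0$ whatever the topology of $\partial D_j$.
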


\begin{proof}
Write $\widetilde X$ for the right-hand side, normed by
$\|q\|_{\widetilde X}:=\|q n\|_{H^{1/2}(\partial D)^3}$.
We first prove $X_D\subset\widetilde X$. Let $q=M\cdot n$ with $M\in A_D$. By (P1),
$M\in H^1(D)^3$, so $M|_{\partial D}\in H^{1/2}(\partial D)^3$. Since
$M\times n=0$, the trace is purely normal, $M|_{\partial D}=(M\cdot n)n=q n$,
whence $q n\in H^{1/2}(\partial D)^3$ and
$\|q n\|_{H^{1/2}}\le C\|M\|_{H^1}\le C'\|M\|_{A_D}$.
The zero-mean condition is $\int_{\partial D_j}M\cdot n=\int_{D_j}\diver M=0$.
Taking the infimum over liftings gives $\|q\|_{\widetilde X}\le C'\|q\|_{X_D}$.

Conversely, let $q\in\widetilde X$. We show that $q\in X_D$. Since
$q n\in H^{1/2}(\partial D)^3$, the trace theorem on the Lipschitz domain $D$
provides $W\in H^1(D)^3$ with $W|_{\partial D}=q n$ and
$\|W\|_{H^1(D)}\le C\|q n\|_{H^{1/2}}$. Set $f:=\diver W\in L^2(D)$. Using
$(q n)\cdot n=q$,
\[
\int_{D_j} f\,\dd x
= \int_{\partial D_j} W\cdot n\,\dd S
= \int_{\partial D_j} q\,\dd S = 0,
\qquad j=1,\ldots,N,
\]
so $f$ has zero mean on each component. By Bogovskii's right inverse of the
divergence \cite[Theorem~2.4]{BorchersSohr1990}, there is $Z\in H^1_0(D)^3$ with
$\diver Z=f$ and $\|Z\|_{H^1}\le C\|f\|_{L^2}$.
Put $M:=W-Z$. Then $M\in H^1(D)^3$, $\diver M=0$, and, since
$Z|_{\partial D}=0$, $M|_{\partial D}=W|_{\partial D}=q n$; hence
$M\times n=0$ and $M\cdot n=q$. Thus $M\in A_D$ and $q\in X_D$, with
$\|q\|_{X_D}\le\|M\|_{A_D}\le C\|q\|_{\widetilde X}$.
\end{proof}

\begin{remark}\label{rem:two-cases}
If $\partial D$ is of class $C^{1,1}$, then $n$ is Lipschitz, hence a
multiplier of $H^{1/2}(\partial D)$. In this case $q n\in H^{1/2}(\partial D)^3$
if and only if $q\in H^{1/2}(\partial D)$, and \eqref{eq:XD-general} reduces to
\[
X_D = H^{1/2}_\diamond(\partial D)
:= \left\{ g\in H^{1/2}(\partial D) :
\int_{\partial D_j} g\,\dd S=0,\; j=1,\ldots,N \right\}.
\]
\end{remark}

\subsection{Sampling functions and the range characterization}

For $a\in B_R$ and $p\in\R^3\setminus\{0\}$, define the dipole potential
\[
d_{a,p}(x) := p\cdot\nabla_x\Phi(x,a)
= -\frac{p\cdot(x-a)}{4\pi|x-a|^3}.
\]
The corresponding sampling function on $\GammaR$ is
\begin{equation}
\varphi_{a,p} := -\nabla d_{a,p}|_{\GammaR}.
\label{eq:sampling-function}
\end{equation}

\begin{lemma}[Range characterization]\label{lem:range-char}
Assume (P1)--(P2) and that $B_R\setminus\overline D$ is connected. Then, for
$a\in B_R\setminus\partial D$,
\begin{equation}
a\in D
\;\Longleftrightarrow\;
\varphi_{a,p}\in\ran(G_D).
\label{eq:range-characterization-G}
\end{equation}
\end{lemma}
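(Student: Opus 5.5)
We prove the two implications separately. Throughout, $G_Dq=-\nabla S_{\partial D}q|_{\GammaR}$ with $q\in X_D$, which by Lemma~\ref{lem:XD-identification} means $qn\in H^{1/2}(\partial D)^3$ with zero mean on each $\partial D_j$.

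\emph{Direction $a\in D\Rightarrow\varphi_{a,p}\in\ran(G_D)$.} Let $a\in D_j$ and let $w:=d_{a,p}$ in $\R^3\setminus\overline D$. We build an interior field $v$ in $D$ such that the jump relations of the single layer reproduce $w$ outside.

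\begin{enumerate}[label=(\roman*)]
\item Let $u\in H^1(D)$ be the harmonic function in $D$ with $u=d_{a,p}$ on $\partial D$. The data $d_{a,p}$ is smooth near $\partial D$, since $a\notin\partial D$. By (P2), $u\in H^2(D)$, so $\partial_n u\in H^{1/2}(\partial D)$ at least componentwise. More precisely, $\nabla u|_{\partial D}\in H^{1/2}(\partial D)^3$, so $(\partial_n u)n=\nabla u-\nabla_{\partial D}u$. Here $\nabla_{\partial D}u$ is the tangential gradient of the smooth datum, and one has to check that it lies in $H^{1/2}$. This check is easy in the $C^{1,1}$ case. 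In the convex case I would instead subtract a smooth extension $\tilde d$ of $d_{a,p}$ (equal to $d_{a,p}$ near $\partial D$ and cut off near $a$). Then $u-\tilde d\in H^2\cap H^1_0$, so its gradient trace is purely normal and lies in $H^{1/2}$. Hence $(\partial_n u-\partial_n d_{a,p})\,n\in H^{1/2}(\partial D)^3$.
\item Set $q:=\partial_n d_{a,p}^{+}-\partial_n u$ on each $\partial D_j$. Near $\partial D$, $d_{a,p}$ is smooth, so the exterior normal derivative is the smooth one, and $q$ is exactly the quantity from (i) up to sign. Mean zero holds on $\partial D_j$: we have $\int_{\partial D_j}\partial_n u=0$, and $\int_{\partial D_j}\partial_n d_{a,p}=0$ because $\Delta d_{a,p}=p\cdot\nabla\delta_a$ has zero total mass (flux of a dipole). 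Components not containing $a$ are handled the same way, even more simply. Thus $q\in X_D$ by Lemma~\ref{lem:XD-identification}.
\item Define $\psi:=d_{a,p}$ outside $D$ and $\psi:=u$ inside. Then $\psi$ is continuous across $\partial D$, harmonic off $\partial D$, decays at infinity, and has normal-derivative jump $q$. By uniqueness, $\psi=S_{\partial D}q$ (the sign convention is $-\Delta\Phi=\delta$, and I would check the sign). Therefore $-\nabla S_{\partial D}q=-\nabla d_{a,p}$ on $\GammaR$, that is, $G_Dq=\varphi_{a,p}$.
\end{enumerate}

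\emph{Direction $a\notin\overline D\Rightarrow\varphi_{a,p}\notin\ran(G_D)$.} Suppose $G_Dq=\varphi_{a,p}$ with $a\in B_R\setminus\overline D$, and set $w:=S_{\partial D}q-d_{a,p}$.

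\begin{enumerate}[label=(\roman*)]
\item Outside $B_R$, both terms are harmonic and decay, and $\nabla w=0$ on $\GammaR$. Hence $w$ is constant on $\GammaR$ (which is connected), and $\partial_n w=0$ there. Exterior uniqueness for the Laplace equation, together with decay, gives $w\equiv0$ outside $B_R$. One can also argue more simply: $\nabla w$ is a harmonic vector field in $\R^3\setminus\overline{B_R}$ that vanishes on the sphere and decays, so it vanishes identically.
\item By unique continuation, since $B_R\setminus(\overline D\cup\{a\})$ is connected, $S_{\partial D}q=d_{a,p}$ on $\R^3\setminus(\overline D\cup\{a\})$.
\item The left side is bounded near $a$, because $S_{\partial D}q$ is harmonic near $a\notin\overline D$. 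The right side is unbounded like $|x-a|^{-2}$ in the direction $p$, since $p\neq0$. This is a contradiction.
\end{enumerate}
This direction is routine.

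The main obstacle is step (i) of the first direction: proving that the constructed $q$ actually lies in $X_D$, i.e.\ the $H^{1/2}$ regularity of $q\,n$ for convex (merely Lipschitz) boundaries. This is exactly where (P2) is used, via the $H^2$ function $u-\tilde d$, whose gradient trace is normal.
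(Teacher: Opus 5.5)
Your proposal is correct and follows the paper's proof essentially step for step. The necessity direction uses the same ingredients: (P2) regularity of the harmonic extension, the normal-derivative jump placed in $X_D$ via Lemma~\ref{lem:XD-identification}, and the single-layer representation. The sufficiency direction likewise uses exterior uniqueness, unique continuation through the connected set $\R^3\setminus(\overline D\cup\{a\})$, and the non-removable dipole singularity.

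Your subtraction of a smooth cut-off extension $\tilde d$, so that $u-\tilde d\in H^2\cap H^1_0$ has a purely normal gradient trace, is a more careful justification of the paper's ``tangential derivatives are continuous'' step. It also makes the appeal to (P2) legitimate, since $d_{a,p}$ itself is singular at $a\in D$. Your sign caution is warranted: with $-\Delta\Phi=\delta$ the jump $q=\partial_n^+-\partial_n^-$ gives $\psi=-S_{\partial D}q$. This is harmless, because $-q\in X_D$ as well.
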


\begin{proof}
We prove the two implications separately.

\textbf{Necessity.}
Assume that $a\in D$. After relabeling the connected components, suppose
$a\in D_1$. Let $v\in H^1(D_1)$ be the unique solution of
\[
\Delta v=0\quad\text{in }D_1,\qquad
v=d_{a,p}|_{\partial D_1}\quad\text{on }\partial D_1 .
\]
By (P2), we have $v\in H^2(D_1)$. Define the continuous piecewise harmonic
function
\[
\widetilde d(x)=
\begin{cases}
v(x), & x\in D_1,\\
d_{a,p}(x), & x\in\mathbb{R}^3\setminus\overline{D_1}.
\end{cases}
\]
The jump of the normal derivative across $\partial D_1$ is given by
\[
q_1:=\partial_n^+\widetilde d-\partial_n^-\widetilde d
\in L^2(\partial D_1).
\]
Since the traces of $\widetilde d$ coincide on $\partial D_1$, the tangential
derivatives are also continuous, and hence
\[
\nabla d_{a,p}-\nabla v
=
q_1 n
\qquad\text{on }\partial D_1 .
\]
The left-hand side belongs to $H^{1/2}(\partial D_1)^3$ due to the smoothness of $d_{a,p}$ and $v$. Therefore
\[
q_1 n\in H^{1/2}(\partial D_1)^3 .
\]
Moreover,
\[
\int_{\partial D_1}q_1\,\dd S
=
\int_{\partial D_1}\partial_n^+d_{a,p}\,\dd S
-
\int_{\partial D_1}\partial_n^-v\,\dd S
=0,
\]
because $v$ is harmonic in $D_1$ and $d_{a,p}$ is a dipole potential with zero
net flux through any closed surface enclosing $a$. Extending $q_1$ by zero to
the other connected components of $\partial D$, we obtain $q\in X_D$ by
\eqref{eq:XD-general}.

Since $\widetilde d$ is continuous, harmonic away from $\partial D_1$, and
decays at infinity, the standard single-layer representation yields
\[
\widetilde d=S_{\partial D_1}q_1 .
\]
Therefore, on $\Gamma_R$ we have $S_{\partial D}q=d_{a,p}$ and hence
\[
G_Dq
=
-\nabla S_{\partial D}q|_{\Gamma_R}
=
-\nabla d_{a,p}|_{\Gamma_R}
=
\varphi_{a,p}.
\]

\textbf{Sufficiency.}
Conversely, assume that $a\notin\overline D$ and
$\varphi_{a,p}\in\ran(G_D)$. Then there exists $q\in X_D$ such
that
\[
-\nabla S_{\partial D}q
=
-\nabla d_{a,p}
\qquad\text{on }\Gamma_R .
\]
Define
\[
w:=S_{\partial D}q-d_{a,p}.
\]
Then $w$ is harmonic in
\[
\mathcal{O}_a:=\mathbb{R}^3\setminus(\overline D\cup\{a\}),
\]
and satisfies
\[
\nabla w=0\qquad\text{on }\Gamma_R .
\]
Since $w$ decays at infinity, the exterior uniqueness result for the Laplace
equation implies
\[
w=0
\qquad\text{in }\mathbb{R}^3\setminus\overline{B_R}.
\]
The connectedness of $B_R\setminus\overline D$ implies that
$\mathcal{O}_a$ is connected. Hence, by the unique continuation property for
harmonic functions,
\[
S_{\partial D}q=d_{a,p}
\qquad\text{in }\mathcal{O}_a .
\]
This is impossible in a neighbourhood of $a$, because the single-layer
potential $S_{\partial D}q$ is smooth near $a$, whereas $d_{a,p}$ has a
non-removable dipole singularity at $a$. 

\end{proof}

Combining \eqref{eq:range-identity} and \eqref{eq:range-characterization-G}, we
obtain the sampling characterization
\begin{equation}
a\in D
\;\Longleftrightarrow\;
\varphi_{a,p}\in\ran\bigl((\im F_D)^{1/2}\bigr),
\qquad a\in B_R\setminus\partial D.
\label{eq:final-range-characterization}
\end{equation}

\subsection{Support uniqueness}

\begin{theorem}[Recovery of the open support]\label{thm:uniqueness}
Let $D,\widetilde D\Subset B_R$ be bounded open sets, each satisfying the standing
topological assumptions of Section~2 together with (P1)--(P2); in particular, it
suffices that every connected component of $D$ and $\widetilde D$ be of class
$C^{1,1}$ or convex. Assume $B_R\setminus\overline{D}$ and
$B_R\setminus\overline{\widetilde D}$ are connected. Let $\sigma$ and
$\widetilde{\sigma}$ be positive bounded conductivities supported by $D$ and
$\widetilde D$, respectively, and let $F_D$ and $F_{\widetilde D}$ be the
magnetic-dipole near-field operators. If
\[
F_D = F_{\widetilde D},
\]
then the open supports coincide:
\[
D = \widetilde D.
\]
\end{theorem}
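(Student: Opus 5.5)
The plan is to deduce the theorem directly from the sampling characterization \eqref{eq:final-range-characterization}. If $F_D=F_{\widetilde D}$, then $\im F_D=\im F_{\widetilde D}$ as operators on $Y$. Hence the nonnegative square roots agree, and so do their ranges. By \eqref{eq:final-range-characterization}, applied to $D$ and to $\widetilde D$ (both satisfy its hypotheses by assumption and Lemma~\ref{lem:admissible}), we get for every fixed $p\neq0$:
\[
a\in D \iff a\in\widetilde D, \qquad a\in B_R\setminus(\partial D\cup\partial\widetilde D).
\]
Thus $D$ and $\widetilde D$ agree off the exceptional set $\partial D\cup\partial\widetilde D$. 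The only real work is to handle points of this set, since the characterization gives no information there.

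Next we show $D\subset\overline{\widetilde D}$. Take $a\in D$ and suppose $a\notin\overline{\widetilde D}$. Since $D$ and $\mathbb R^3\setminus\overline{\widetilde D}$ are open, some ball $B_\varepsilon(a)\subset D\setminus\overline{\widetilde D}$. The set $\partial D$ does not meet $D$, and $\partial\widetilde D$ does not meet the ball. So every point of the ball lies outside $\partial D\cup\partial\widetilde D$, and the equivalence above applies. In particular $a$ itself is such a point, with $a\in D$ and $a\notin\widetilde D$, a contradiction. Hence $D\subset\overline{\widetilde D}$, and by symmetry $\widetilde D\subset\overline D$.

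It remains to upgrade this to equality of the open sets; this is the main obstacle. From the two inclusions, $\overline D=\overline{\widetilde D}$. Now suppose $a\in D\setminus\widetilde D$. Then $a\in\overline{\widetilde D}\setminus\widetilde D=\partial\widetilde D$, while $a$ is an interior point of $D$.

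I would first try to obtain the contradiction from the regularity of the components, which rules out thin slits. A $C^{1,1}$ or convex (hence Lipschitz) domain $\Omega$ satisfies $\operatorname{int}\overline\Omega=\Omega$. Since there are finitely many components with pairwise disjoint closures (they are open, disjoint, and have connected Lipschitz boundaries), this gives $\operatorname{int}\overline{\widetilde D}=\widetilde D$. Because $D$ is open and contained in $\overline{\widetilde D}$, we get $D\subset\operatorname{int}\overline{\widetilde D}=\widetilde D$, and by symmetry $\widetilde D\subset D$.

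If closures of distinct components could touch, I would instead argue directly with points near $a$. Since $a\in\partial\widetilde D$, the complement $B_R\setminus\overline{\widetilde D}$ has points arbitrarily close to $a$. These points lie in the open set $D$, hence not in $\partial D$, and not in $\overline{\widetilde D}$. The equivalence applies to them and yields $a'\in D$, $a'\notin\widetilde D$, a contradiction. The same argument needs only the Lipschitz (exterior cone) property, so this fallback avoids the disjoint-closure issue altogether. Either way, $D=\widetilde D$.
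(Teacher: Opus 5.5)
Your proof is correct and follows the paper's route. You show that $D$ and $\widetilde D$ agree off $\partial D\cup\partial\widetilde D$, then that $\overline D=\overline{\widetilde D}$, then use the regular-open property $\operatorname{int}\overline{\widetilde D}=\widetilde D$. The only difference is small: you get the closure equality by applying the equivalence at points of $D\setminus\overline{\widetilde D}$, whereas the paper uses density of $D\setminus(\partial D\cup\partial\widetilde D)$ in $D$.

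The one flaw is your justification of the regular-open step. Disjoint components with connected Lipschitz boundaries can still have touching closures, for example two cubes sharing a face. Your fallback does not avoid this, because the exterior-cone property of $\widetilde D$ fails on such a shared face. Instead, invoke, as the paper does, the assumption that $D$ and $\widetilde D$ are themselves Lipschitz open sets (as in the hypotheses of the well-posedness proposition and the range-identity theorem). That makes each of them regular open directly.
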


\begin{proof}
From $F_D=F_{\widetilde D}$ we get $\im F_D=\im F_{\widetilde D}$. Therefore
\[
\ran\bigl((\im F_D)^{1/2}\bigr)
= \ran\bigl((\im F_{\widetilde D})^{1/2}\bigr).
\]
Using \eqref{eq:final-range-characterization}, for every
$a\in B_R\setminus(\partial D\cup\partial \widetilde D)$,
\[
a\in D
\;\Longleftrightarrow\;
\varphi_{a,p}\in\ran\bigl((\im F_D)^{1/2}\bigr)
\;\Longleftrightarrow\;
\varphi_{a,p}\in\ran\bigl((\im F_{\widetilde D})^{1/2}\bigr)
\;\Longleftrightarrow\;
a\in \widetilde D.
\]
Thus, writing $N:=\partial D\cup\partial \widetilde D$, we have
$D\setminus N = \widetilde D\setminus N$. Each of $D$ and $\widetilde D$ is Lipschitz,
and hence a regular open set, and its boundary has an empty interior, so
$D\setminus N$ is dense in $D$ and $\widetilde D\setminus N$ is dense in
$\widetilde D$. Therefore
\[
\overline D = \overline{D\setminus N}
= \overline{\widetilde D\setminus N} = \overline{\widetilde D},
\]
and consequently
\[
D = \operatorname{int}(\overline D)
= \operatorname{int}(\overline{\widetilde D}) = \widetilde D.
\]
\end{proof}

\section{A numerical factorization algorithm}\label{sect5}

We now outline a finite-dimensional reconstruction method motivated by
\eqref{eq:final-range-characterization}.

\subsection{Finite-dimensional data model}

Let \(x_1,\dots,x_{N}\in\GammaR\) be a set of approximately uniformly
distributed points on the measurement sphere, generated for instance by the
Fibonacci sphere algorithm~\cite{Gonzalez2010}. At each point we place three
magnetic dipoles oriented along the Cartesian axes, giving the
source family
\[
m_{j,k}: x\mapsto e_k\,\delta_{x_j}(x),\qquad
j=1,\dots,N,\; k=1,2,3,
\]
where \(e_k\) is the vector of the \(k\)-th Cartesian unit and \(\delta_{x_j}\) is the
surface Dirac delta at \(x_j\). The incident magnetic field in \(D\) produced by a
point dipole \(m_{j,k}\) is
\[
H_0(x)=-\nabla_x\,\frac{e_k\cdot(x-x_j)}{4\pi|x-x_j|^3},
\]
and the corresponding incident electric field is
\[
E_0(x)=\frac{\ii\omega\mu}{4\pi}\,
\frac{e_k\times(x-x_j)}{|x-x_j|^3}.
\]

The experiment consists of exciting each dipole \(m_{j,k}\) in turn and
recording the induced magnetic field \(H_s\) at every source point
\(x_i\). Ordering the source indices as \(l=(j-1)\cdot3+k\) and the
measurement indices as \(m=(i-1)\cdot3+a\), the $(m,l)$-entry of the near-field
matrix $\mathbf{F}$ is
\[
 F_{ml}
:=
e_a\cdot H_s(x_i),
\]
where \(H_s\) is the induced field produced by the source
\(m_{j,k}\). With \(N\) source points, the matrix
\(\mathbf F\) is of size \(3N\times3N\), with $i,j=1,\cdots,N$ and $k,a=1,2,3$.
In the experiments below, we use
\(N=80\), giving \(240\) sources and receivers.

\paragraph{Semi-positive data matrix.}
 In finite dimensions, consistent with the convention
\(\im T=(T-T^\dagger)/(2\ii)\) used above, we define the Hermitian matrix
\[
\mathbf F_I
:=
\frac{\mathbf F-\mathbf F^\dagger}{2\ii}.
\]
Theoretically, \(\im F_D\) is a semi-positive operator. However, in practice, measurement noise and discretization error may destroy positivity.
We therefore use the regularized positive matrix
\begin{equation}
\mathbf F_{I,\alpha}:=\mathbf F_I+\alpha I, \label{reguF}
\end{equation}
with a special chosen $\alpha>0$.
\paragraph{Sampling vectors.}

For a sampling point \(z\in B_R\) and a direction \(p\in\R^3\setminus\{0\}\),
define the dipole sampling field
\[
\varphi_{z,p}(x):=-\nabla d_{z,p}(x),\qquad
d_{z,p}(x)=p\cdot\nabla_x\Phi(x,z)
=-\frac{p\cdot(x-z)}{4\pi|x-z|^3}.
\]
A direct computation gives
\[
\varphi_{z,p}(x)=\frac{p-3(p\cdot\hat r)\hat r}{4\pi|x-z|^3},
\qquad \hat r=\frac{x-z}{|x-z|}.
\]
The discrete sampling vector \(\bm\phi_{z,p}\in\mathbb C^{3N}\) is formed by
evaluating \(\varphi_{z,p}\) at the measurement points:
\[
(\bm\phi_{z,p})_{(i-1)\cdot3+a}
:=e_a\cdot\varphi_{z,p}(x_i),\qquad
i=1,\dots,N,\; a=1,2,3.
\]

\paragraph{Indicator function.}

The range characterization  suggests the Picard-type criterion \cite{KirschGrinberg2008}. It follows from \eqref{eq:final-range-characterization} that a point 
$z$ belongs to the domain $D$ if and only if
\[
\varphi_{z,p}\in\ran\bigl((\im F_D)^{1/2}\bigr).
\]
For a compact positive operator with eigenpairs \((\lambda_n,\psi_n)\), this is
equivalent to
\[
\sum_n \frac{|(\varphi_{z,p},\psi_n)|^2}{\lambda_n}<\infty.
\]
Thus we define the regularized directional indicator as the normalized Picard
quotient
\[
I_\alpha(z,p)
:=
\frac{\|\bm\phi_{z,p}\|^2}
{\bm\phi_{z,p}^*\,\mathbf F_{I,\alpha}^{-1}\,\bm\phi_{z,p}}.
\]
The numerator \(\|\bm\phi_{z,p}\|^2\) renders the indicator invariant under the
\(z\)-dependent scaling of the sampling vector. The quotient is a regularized Picard indicator. 
In accordance with the range characterization, it is expected to remain relatively large for 
\(z\in D\) and to be small for \(z\notin D\). To reduce directional dependence, we use the three Cartesian
directions \(e_1,e_2,e_3\) and pool the numerators and denominators separately,
\begin{equation}
I_\alpha(z)
:=
\frac{\sum_{k=1}^3\|\bm\phi_{z,e_k}\|^2}
{\sum_{k=1}^3\bm\phi_{z,e_k}^*\,\mathbf F_{I,\alpha}^{-1}\,\bm\phi_{z,e_k}}.
\label{eq:indicator}
\end{equation}
Pooling before dividing is more stable under noise than averaging the
three directional quotients. The support is then reconstructed by thresholding
\(I_\alpha(z)\), which by the range characterization is relatively large for
\(z\in D\) and small for \(z\notin D\), up to resolution and noise effects.

The cost of the indicator is low. The matrix \(\mathbf F_{I,\alpha}\) is
independent of the sampling point, so it is factored once and reused. Each point
then requires only the three solves \(\mathbf F_{I,\alpha}^{-1}\bm\phi_{z,e_k}\)
against this stored factorization and the corresponding inner products. No
forward problem is solved during the reconstruction stage. The eigenvalues of \(\mathbf F_I\) are
not needed for the reconstruction itself. Only the single most negative one
enters through the parameter rule below.

\paragraph{Choice of the regularization parameter.}
The Hermitian matrix \(\mathbf F_I\) is
is positive semidefinite in the continuum limit; any negative eigenvalues arise solely from measurement noise and discretization errors. In \eqref{reguF}, 
 We set
\begin{equation}
\alpha=\max\{0, -\lambda_{\min}(\mathbf F_I)\}+\varepsilon_0,
\qquad \varepsilon_0 = 10^{-9},
\label{eq:alpha-rule}
\end{equation}
where the small additive constant \(\varepsilon_0\) serves as a fixed
regularization floor. This rule guarantees that
$\mathbf F_I+\alpha I$ is positive definite and hence permits a Cholesky factorization. 
The ratio \(\alpha/\lambda_{\max}(\mathbf F_I)\) measures how much
regularization is needed relative to the dominant eigenvalue. A small
ratio (e.g.\ between \(10^{-5}\) and \(10^{-3}\)) confirms that \(\mathbf F_I\) is
nearly semi-positive definite and the regularization is genuinely a minor
correction, as predicted by the theory. This quantity will be reported for every
numerical example below.

\begin{algorithm}[htpb]
\caption{Reconstruction via the factorization range identity}
\label{alg:reconstruction}
\SetAlgoLined
\KwIn{near-field matrix $\mathbf F\in\mathbb C^{3N\times 3N}$, sampling grid $\{z_j\}\subset B_R$}
\KwOut{normalized indicator $\tilde I_\alpha(z)$}

$\mathbf F_I \gets (\mathbf F-\mathbf F^\dagger)/(2\ii)$\;
$\alpha \gets \max\{0, -\lambda_{\min}(\mathbf F_I)\}+\varepsilon_0$\;
$\mathbf F_{I,\alpha} \gets \mathbf F_I + \alpha I$\;
Compute the Cholesky factorization of $\mathbf F_{I,\alpha}$\;

\For{each sampling point $z_j$}{
    Compute the sampling vector $\bm\phi_{z_j,e_k}$ for $k=1,2,3$\;
    Solve $\mathbf F_{I,\alpha}\,\mathbf x_k = \bm\phi_{z_j,e_k}$ for each $k$ via the stored factorization\;
    $I_\alpha(z_j) \gets \dfrac{\sum_{k=1}^3\|\bm\phi_{z_j,e_k}\|^2}
                             {\sum_{k=1}^3\bm\phi_{z_j,e_k}^*\,\mathbf x_k}$\;
}

Normalize $\tilde I_\alpha \gets I_\alpha/\max I_\alpha$.
\end{algorithm}

\subsection{Numerical experiments}

\paragraph{Settings.}
The synthetic data are generated by an independent edge (N\'ed\'elec) finite
element solver for the full eddy-current system~\eqref{eq:full-eddy}, with
angular frequency \(\omega=2\pi\times10^{6}\), constant permeability
\(\mu=4\pi\times10^{-7}\), and conductivity \(\sigma=10^{-1}\) inside 
the inclusions. The measurement sphere has radius
\(R=1.5\). Sources and receivers are point magnetic dipoles at
\(N=80\) Fibonacci-sphere points as described above, with three
Cartesian directions per point, giving a total of \(3N=240\) excitations and
measurements. The near-field matrix \(\mathbf F\in\mathbb C^{240\times240}\) collects the
measured induced-field components.

To assess robustness, we perturb relative multiplicative noise of level \(\delta\) independently to the
real and imaginary parts of each datum according to
\[\re F_{ml}\mapsto(1+\delta\xi_{ml,1})\re F_{ml},\qquad\im F_{ml}\mapsto(1+\delta\xi_{ml,2})\im F_{ml}\]
with \(\xi_{ml,1},\xi_{ml,2}\overset{\text{i.i.d.}}{\sim} U(-1,1)\). 
The indicators are evaluated on a \(140\times140\) grid on the plane \(z=0\).

\paragraph{Example 1: two inclusions.}
This example verifies that the method accurately localizes multiple
inclusions.
Two cubes of side \(0.2\) are centered at \((0.3,0.3,0)\) and
\((-0.2,-0.4,0)\). Figure~\ref{fig:spectrum2sq} displays the eigenvalue magnitudes
\(|\lambda_j|\) of \(\mathbf F_I\) in descending order on a logarithmic
scale. Of the \(240\) eigenvalues, only \(32\) have small negative values. The most
negative eigenvalue is \(\lambda_{\min}\approx-3.30\times10^{-9}\), while the
largest positive eigenvalue is \(\lambda_{\max}\approx5.04\times10^{-5}\). 
This provides direct numerical evidence that \(\mathbf F_I\) inherits the
theoretical semi-positivity of \(\im F_D\) up to a negligible numerical perturbation. 
Figure~\ref{fig:recon2sqN0} shows the noise-free
reconstruction. The indicator forms two sharply separated maxima located at the
true centers, which is consistent with the range characterization.

\begin{figure}[htpb]
\centering
\includegraphics[width=0.55\textwidth]{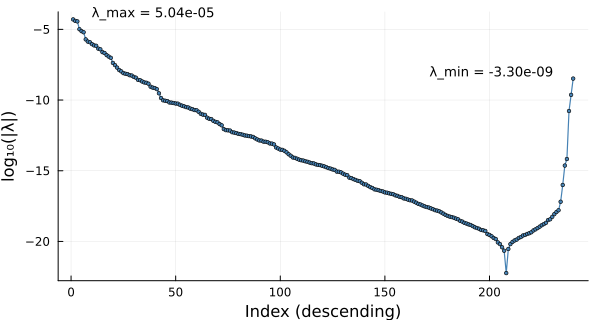}
\caption{Eigenvalue magnitudes of \(\mathbf F_I\) for the noise-free
two-inclusion example, plotted in descending order on a \(\log_{10}\) scale.}
\label{fig:spectrum2sq}
\end{figure}

\begin{figure}[htpb]
\centering
\begin{subfigure}{0.42\textwidth}
\centering
\includegraphics[width=\textwidth]{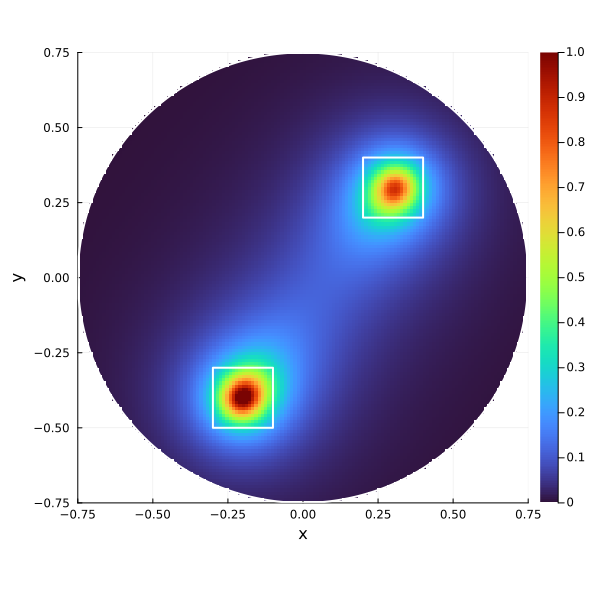}
\caption{}
\end{subfigure}
\hfill
\begin{subfigure}{0.47\textwidth}
\centering
\includegraphics[width=\textwidth]{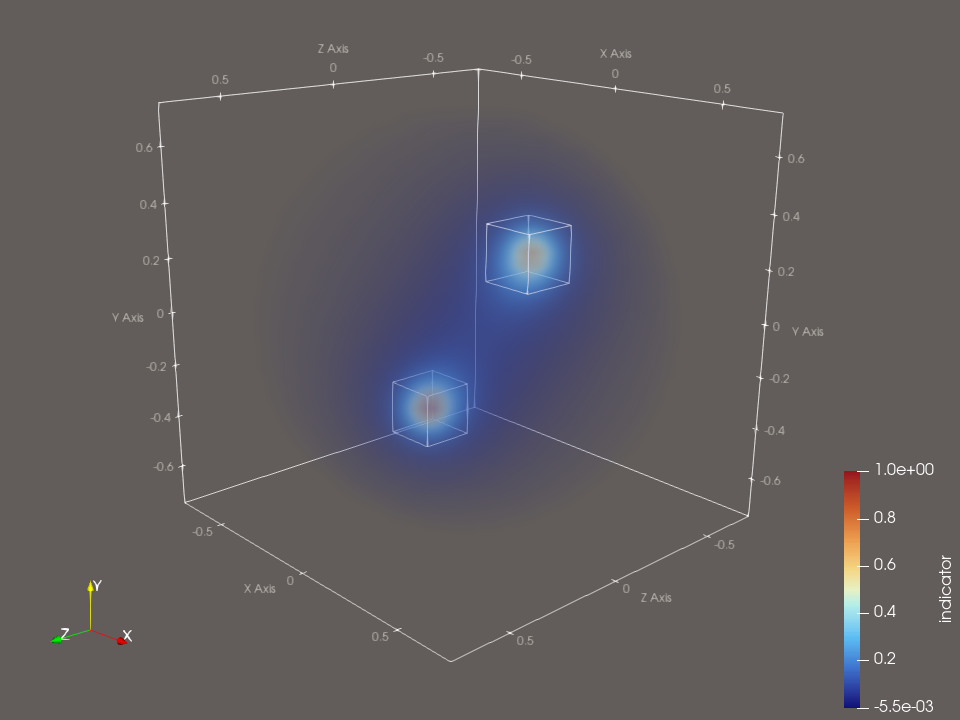}
\caption{}
\end{subfigure}
\caption{Reconstruction of two cubic inclusions from noise-free data.
The white squares indicate the true conductors. The data-driven parameter is
\(\alpha/\lambda_{\max}=8.54\times10^{-5}\). Left: indicator on cross-section \(z=0\). Right: volume rendering of the indicator.}
\label{fig:recon2sqN0}
\end{figure}

\paragraph{Example 2: robustness to noise.}
This example tests the stability of the method under increasing
measurement noise. As the noise level grows, \(\mathbf F_I\) loses
positive definiteness and \(\alpha/\lambda_{\max}\) increases.
Figure~\ref{fig:noise} shows the reconstructions for noise levels
\(\delta=0,1,2,5,10\%\) under the rule \eqref{eq:alpha-rule} with
the fixed floor \(\varepsilon_0=10^{-9}\). The
regularization parameter adapts automatically, with the relative value
\(\alpha/\lambda_{\max}\) increasing from \(8.54\times10^{-5}\) to
\(8.88\times10^{-3}\) as documented in Table~\ref{tab:noise}. 
This monotonic increase quantifies the progressive loss of semi-positive definiteness of \(\mathbf F_I\) under
measurement noise. Stronger noise pushes the most negative eigenvalue further below
zero, requiring proportionally more regularization to restore positivity. However, the two
inclusions are still individually recovered at high noise level.

\begin{table}[htpb]
\centering
\begin{tabular}{cc}
\hline
noise \(\delta\) & \(\alpha/\lambda_{\max}\) \\
\hline
\(0\%\)  & \(8.54\times10^{-5}\) \\
\(1\%\)  & \(9.74\times10^{-4}\) \\
\(2\%\)  & \(1.83\times10^{-3}\) \\
\(5\%\)  & \(4.78\times10^{-3}\) \\
\(10\%\) & \(8.88\times10^{-3}\) \\
\hline
\end{tabular}
\caption{Two-inclusion example: the data-driven regularization parameter
\eqref{eq:alpha-rule} with \(\varepsilon_0=10^{-9}\) as a function of the noise level.}
\label{tab:noise}
\end{table}

\begin{figure}[htpb]
\centering
\includegraphics[width=0.19\textwidth]{figures/indicator_2sq_N0.png}\hfill
\includegraphics[width=0.19\textwidth]{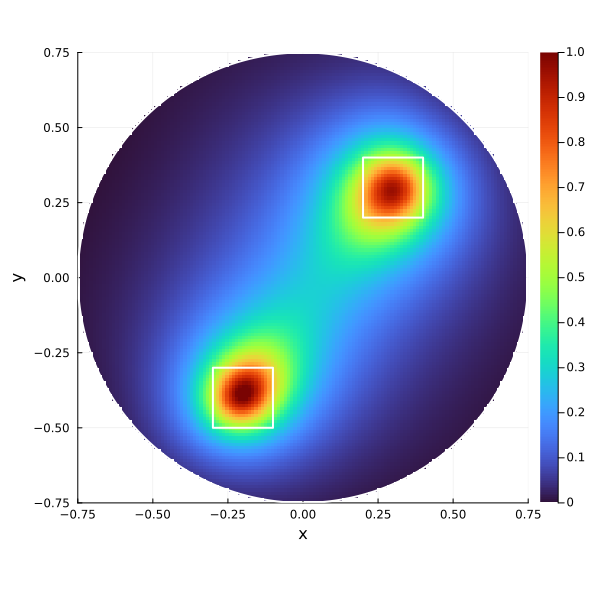}\hfill
\includegraphics[width=0.19\textwidth]{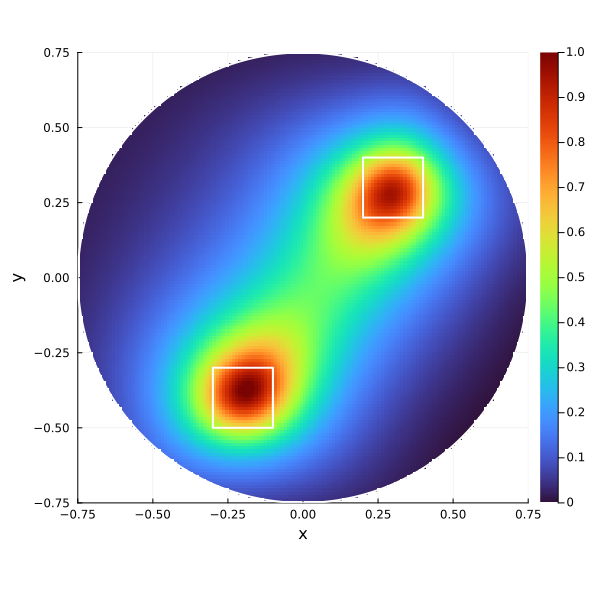}\hfill
\includegraphics[width=0.19\textwidth]{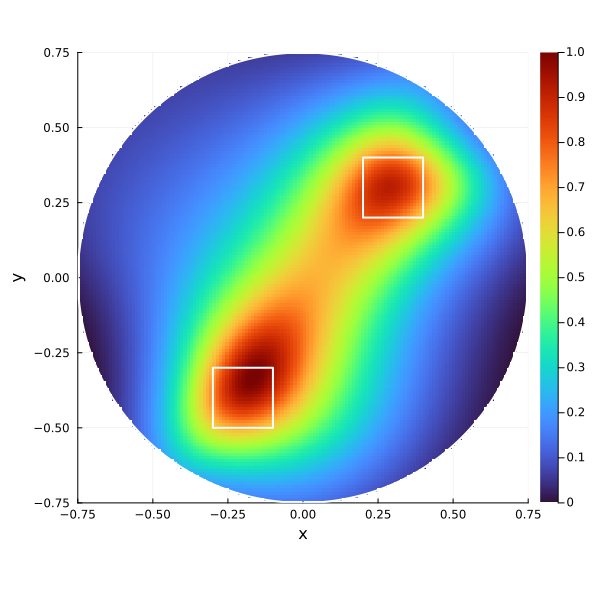}\hfill
\includegraphics[width=0.19\textwidth]{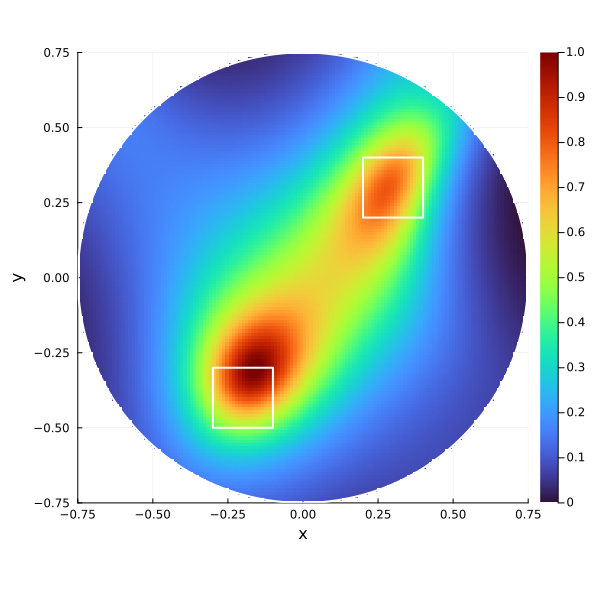}\\
\centering
\includegraphics[width=0.19\textwidth]{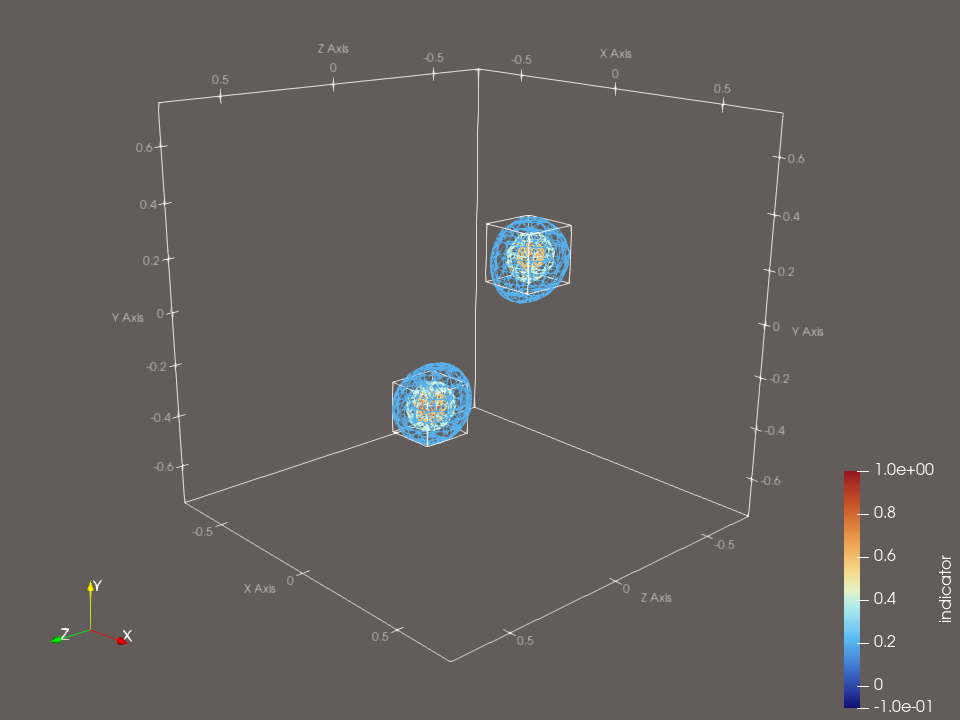}\hfill
\includegraphics[width=0.19\textwidth]{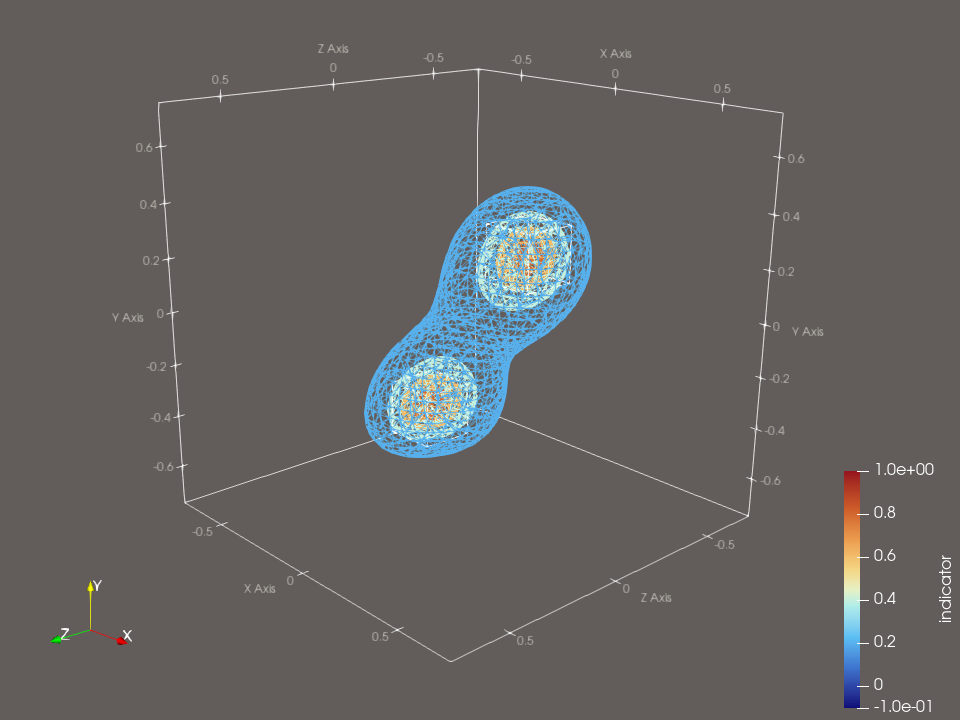}\hfill
\includegraphics[width=0.19\textwidth]{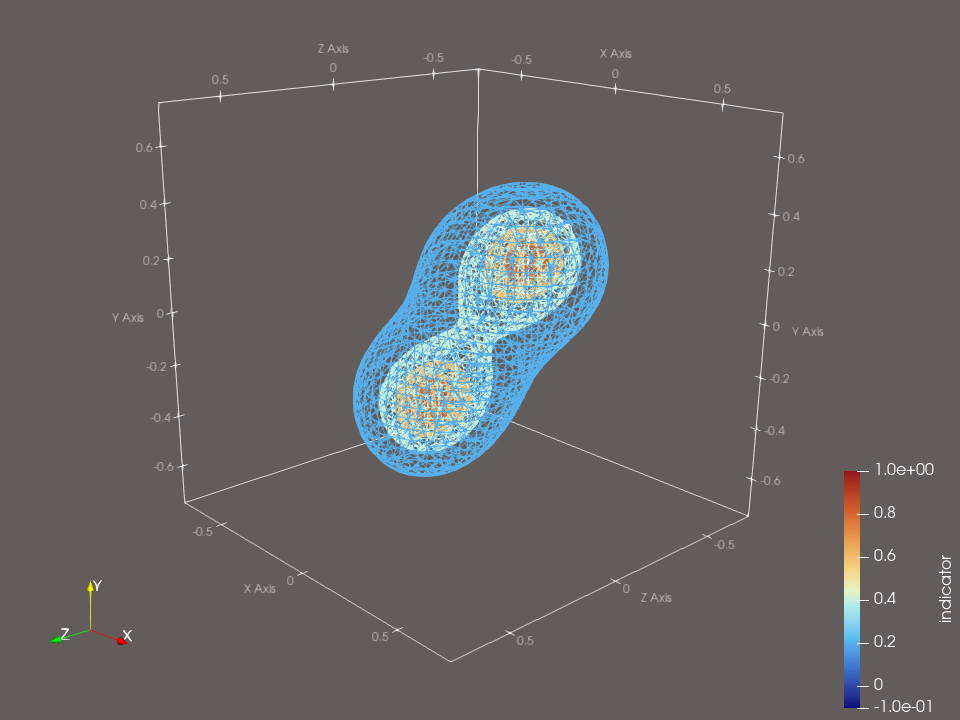}\hfill
\includegraphics[width=0.19\textwidth]{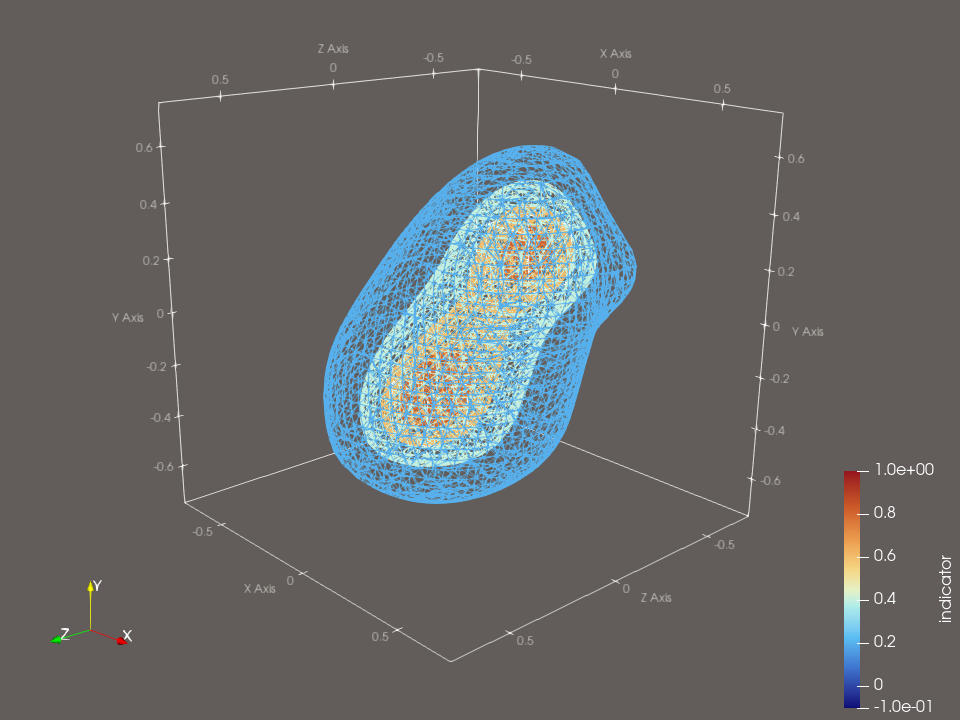}\hfill
\includegraphics[width=0.19\textwidth]{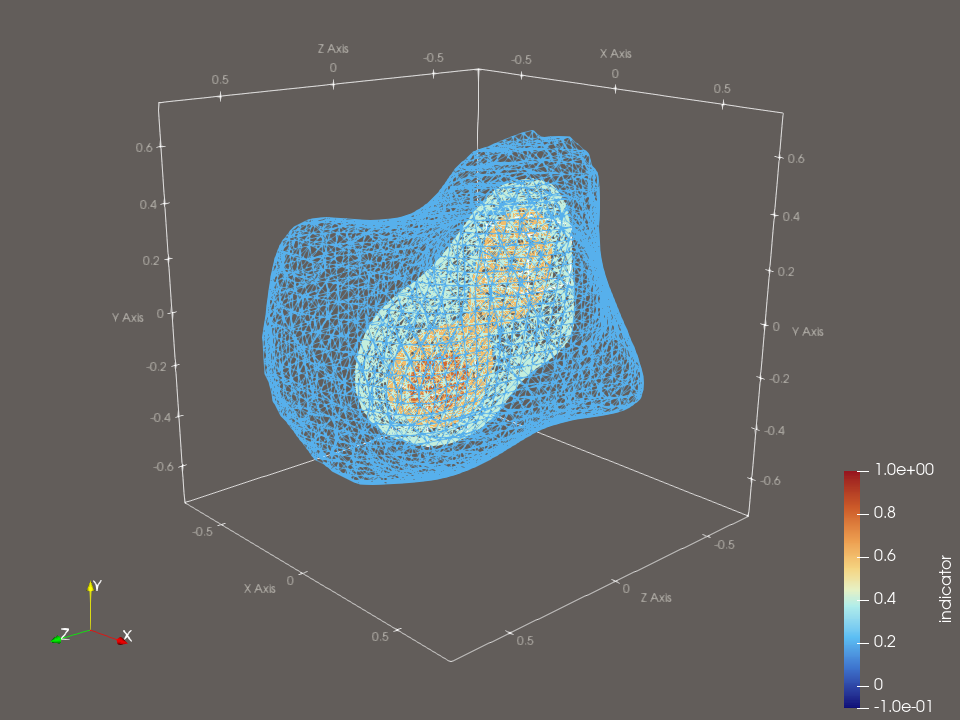}
\caption{Reconstructions of the two inclusions at noise levels
\(\delta=0,1,2,5,10\%\) (left to right) under the single rule
\eqref{eq:alpha-rule} with fixed \(\varepsilon_0=10^{-9}\). 
Top: indicators on cross-section \(z=0\). Bottom: indicator isosurfaces at levels $0.2$,$0.4$,$0.6$,$0.8$.}
\label{fig:noise}
\end{figure}

\paragraph{Example 3: a non-convex inclusion.}
Figure~\ref{fig:reconL} shows the reconstruction of an L-shaped conductor placed
on $xOy$ plane with thickness $0.2$. The inclusion is non-convex and has reentrant
edges, so it is neither convex nor of class $C^{1,1}$ and therefore lies outside
the geometry class covered by Theorem~\ref{thm:uniqueness}. 
However, both arms and the corner are recovered, showing that the
method captures non-convex supports without any convexity prior.

\begin{figure}[htpb]
\centering
\begin{subfigure}{0.42\textwidth}
\centering
\includegraphics[width=\textwidth]{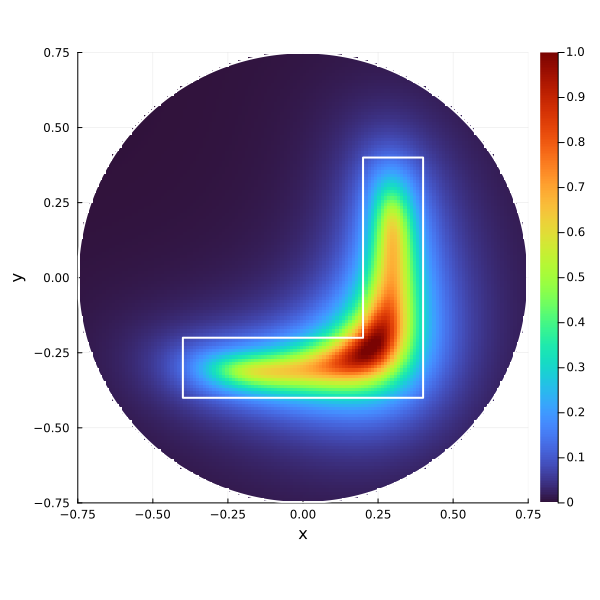}
\caption{}
\end{subfigure}
\hfill
\begin{subfigure}{0.47\textwidth}
\centering
\includegraphics[width=\textwidth]{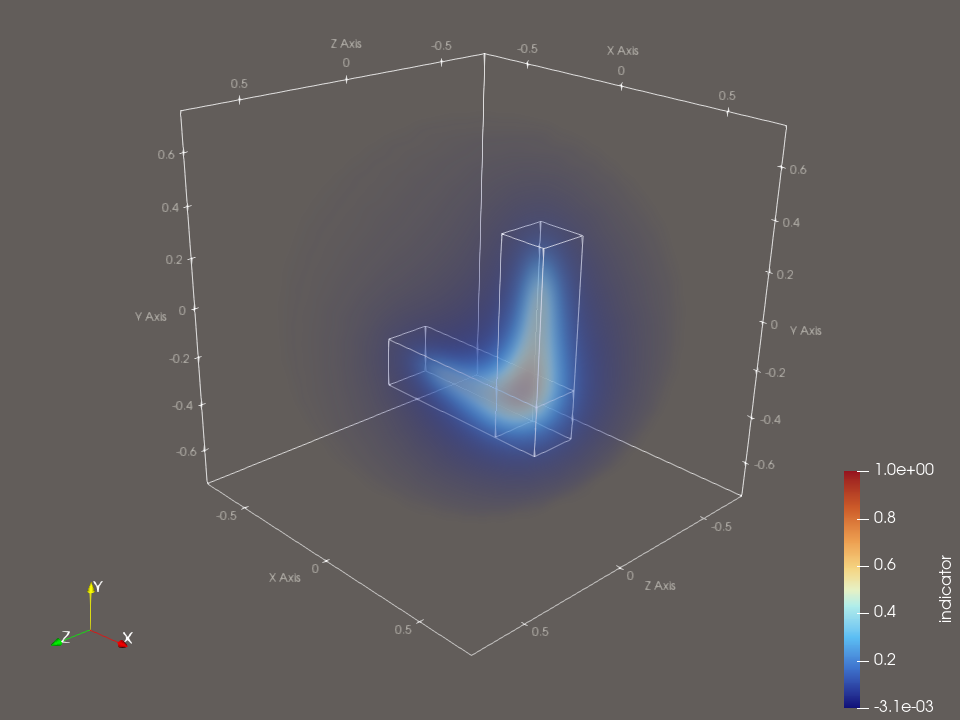}
\caption{}
\end{subfigure}
\caption{Reconstruction of a non-convex L-shaped conductor from
noise-free data. The parameter is
\(\alpha/\lambda_{\max}=2.22\times10^{-4}\). Left: indicator on cross-section \(z=0\). Right: volume rendering of the indicator.}
\label{fig:reconL}
\end{figure}

\section{Conclusion}\label{sect6}

We have developed a factorization framework for support recovery in a three-dimensional 
time-harmonic eddy-current model of magnetic induction tomography with constant
magnetic permeability. The central modeling step is the representation of the
induced current \(J=\sigma E\) by a current vector potential \(M\) with
\(J=\curl M\), \(\diver M=0\) and \(M\times n=0\) on \(\partial D\), which
reduces the full-space problem to an interior variational problem on the unknown
conductor \(D\) whose exterior signature is carried by the single normal flux
\(M\cdot n\). This leads to the physical factorization
\(F_D=G_DB_DG_D^*\). Using 
the Riesz map of \(X_D\) to distinguish the anti-dual adjoint \(G_D^*\) from the
Hilbert adjoint \(G_D^\dagger\), we obtained 
\(\im F_D=G_DP_DG_D^\dagger\) where \(P_D\) is coercive. Douglas' theorem then yields the range identity
\(\ran((\im F_D)^{1/2})=\ran(G_D)\). This identity holds for bounded Lipschitz
inclusions under mild topological assumptions. For inclusions whose connected
components are of class \(C^{1,1}\) or convex, we deduced that the magnetic near-field operator
determines the conductive support uniquely.

On the computational side we proposed a non-iterative spectral indicator built
from the Hermitian part \(\mathbf F_I\) of the
data matrix, where only one small linear solve per sampling point is needed. The
regularization is driven by the data through the rule
\(\alpha=\max\{0, -\lambda_{\min}(\mathbf F_I)\}+\varepsilon_0\) with the single
fixed floor \(\varepsilon_0=10^{-9}\). Numerical experiments with two cubic inclusions and a
non-convex L-shaped conductor 
confirmed that the method locates and separates
the inclusions successfully, and that the reconstruction degrades gracefully as the noise
increases.

\bibliographystyle{abbrv}
\bibliography{references}

@article{ABDG1998,
    author = {Amrouche, C. and Bernardi, C. and Dauge, M. and Girault, V.},
    title = {Vector potentials in three-dimensional non-smooth domains},
    journal = {Mathematical Methods in the Applied Sciences},
    volume = {21},
    number = {9},
    pages = {823-864},
    doi = {https://doi.org/10.1002/(SICI)1099-1476(199806)21:9<823::AID-MMA976>3.0.CO;2-B},
    url = {https://onlinelibrary.wiley.com/doi/abs/10.1002/%28SICI%291099-1476%28199806%2921%3A9%3C823%3A%3AAID-MMA976%3E3.0.CO%3B2-B},
    eprint = {https://onlinelibrary.wiley.com/doi/pdf/10.1002/%28SICI%291099-1476%28199806%2921%3A9%3C823%3A%3AAID-MMA976%3E3.0.CO%3B2-B},
    year = {1998}
}

@article{Gonzalez2010,
    author  = {Gonz{\'a}lez, {\'A}lvaro},
    title   = {Measurement of areas on a sphere using {Fibonacci} and latitude--longitude lattices},
    journal = {Mathematical Geosciences},
    year    = {2010},
    volume  = {42},
    number  = {1},
    pages   = {49--64},
    doi     = {10.1007/s11004-009-9257-x}
}

@article{AmmariChenChenVolkovWang2013,
    title = {Target detection and characterization from electromagnetic induction data},
    journal = {Journal de Mathématiques Pures et Appliquées},
    volume = {101},
    number = {1},
    pages = {54-75},
    year = {2014},
    issn = {0021-7824},
    doi = {https://doi.org/10.1016/j.matpur.2013.05.002},
    url = {https://www.sciencedirect.com/science/article/pii/S0021782413000792},
    author = {Habib Ammari and Junqing Chen and Zhiming Chen and Josselin Garnier and Darko Volkov}
}

@article{ArnoldHarrach2013,
    doi = {10.1088/0266-5611/29/9/095004},
    url = {https://doi.org/10.1088/0266-5611/29/9/095004},
    year = {2013},
    month = {aug},
    publisher = {IOP Publishing},
    volume = {29},
    number = {9},
    pages = {095004},
    author = {Arnold, L and Harrach, B},
    title = {Unique shape detection in transient eddy current problems},
    journal = {Inverse Problems}
}

@article{ChenLiangZou2020,
author = {Chen, Junqing and Liang, Ying and Zou, Jun},
title = {Mathematical and Numerical Study of a Three-Dimensional Inverse Eddy Current Problem},
journal = {SIAM Journal on Applied Mathematics},
volume = {80},
number = {3},
pages = {1467-1492},
year = {2020},
doi = {10.1137/19M1282866},
URL = { 
        https://doi.org/10.1137/19M1282866
},
eprint = { 
        https://doi.org/10.1137/19M1282866
}
}

@article{ChenLong2024,
    author  = {Chen, Junqing and Long, Zehao},
    title   = {An Iterative Method for the Inverse Eddy Current Problem with Total Variation Regularization},
    journal = {Journal of Scientific Computing},
    year    = {2024},
    volume  = {99},
    number  = {2},
    pages   = {38},
    doi     = {10.1007/s10915-024-02501-9}
}

@article{ColtonKirsch1996,
    doi = {10.1088/0266-5611/12/4/003},
    url = {https://doi.org/10.1088/0266-5611/12/4/003},
    year = {1996},
    month = {aug},
    publisher = {},
    volume = {12},
    number = {4},
    pages = {383},
    author = {David Colton and Andreas Kirsch},
    title = {A simple method for solving inverse scattering problems in the resonance region},
    journal = {Inverse Problems}
}

@article{Douglas1966,
    ISSN = {00029939, 10886826},
    URL = {http://www.jstor.org/stable/2035178},
    author = {R. G. Douglas},
    journal = {Proceedings of the American Mathematical Society},
    number = {2},
    pages = {413--415},
    publisher = {American Mathematical Society},
    title = {On Majorization, Factorization, and Range Inclusion of Operators on Hilbert Space},
    urldate = {2026-07-09},
    volume = {17},
    year = {1966}
}

@article{GebauerHankeSchneider2008,
    doi = {10.1088/0266-5611/24/1/015007},
    url = {https://doi.org/10.1088/0266-5611/24/1/015007},
    year = {2007},
    month = {dec},
    publisher = {},
    volume = {24},
    number = {1},
    pages = {015007},
    author = {Gebauer, Bastian and Hanke, Martin and Schneider, Christoph},
    title = {Sampling methods for low-frequency electromagnetic imaging},
    journal = {Inverse Problems}
}

@article{Griffiths2001,
    doi = {10.1088/0957-0233/12/8/319},
    url = {https://doi.org/10.1088/0957-0233/12/8/319},
    year = {2001},
    month = {aug},
    publisher = {},
    volume = {12},
    number = {8},
    pages = {1126},
    author = {H Griffiths},
    title = {Magnetic induction tomography},
    journal = {Measurement Science and Technology}
}

@book{Grisvard1985,
    author = {Grisvard, Pierre},
    title = {Elliptic Problems in Nonsmooth Domains},
    publisher = {Society for Industrial and Applied Mathematics},
    year = {2011},
    doi = {10.1137/1.9781611972030},
    address = {},
    edition   = {},
    URL = {https://epubs.siam.org/doi/abs/10.1137/1.9781611972030},
    eprint = {https://epubs.siam.org/doi/pdf/10.1137/1.9781611972030}
}

@article{HaddarRiahi2021,
    doi = {10.1088/1361-6420/ac1c50},
    url = {https://doi.org/10.1088/1361-6420/ac1c50},
    year = {2021},
    month = {aug},
    publisher = {IOP Publishing},
    volume = {37},
    number = {10},
    pages = {105002},
    author = {Haddar, Houssem and Riahi, Mohamed Kamel},
    title = {Near-field linear sampling method for axisymmetric eddy current tomography},
    journal = {Inverse Problems}
}

@article{Kirsch2004,
    doi = {10.1088/0266-5611/20/6/S08},
    url = {https://doi.org/10.1088/0266-5611/20/6/S08},
    year = {2004},
    month = {nov},
    publisher = {},
    volume = {20},
    number = {6},
    pages = {S117},
    author = {Andreas Kirsch},
    title = {The factorization method for Maxwell's equations},
    journal = {Inverse Problems}
}

@book{KirschGrinberg2008,
    author = {Kirsch, Andreas and Grinberg, Natalia},
    title = {The Factorization Method for Inverse Problems},
    publisher = {Oxford University Press},
    year = {2007},
    month = {12},
    isbn = {9780199213535},
    doi = {10.1093/acprof:oso/9780199213535.001.0001},
    url = {https://doi.org/10.1093/acprof:oso/9780199213535.001.0001},
}

@book{Monk2003,
    author = {Monk, Peter},
    title = {Finite Element Methods for Maxwell's Equations},
    publisher = {Oxford University Press},
    year = {2003},
    month = {04},
    isbn = {9780198508885},
    doi = {10.1093/acprof:oso/9780198508885.001.0001},
    url = {https://doi.org/10.1093/acprof:oso/9780198508885.001.0001},
}

@article{ScharfetterCasanasRosell2003,
    author={Scharfetter, H. and Casanas, R. and Rosell, J.},
    journal={IEEE Transactions on Biomedical Engineering}, 
    title={Biological tissue characterization by magnetic induction spectroscopy (MIS): requirements and limitations}, 
    year={2003},
    volume={50},
    number={7},
    pages={870-880},
    doi={10.1109/TBME.2003.813533}
}

@article{TamburrinoRubinacci2002,
    doi = {10.1088/0266-5611/18/6/323},
    url = {https://doi.org/10.1088/0266-5611/18/6/323},
    year = {2002},
    month = {nov},
    publisher = {},
    volume = {18},
    number = {6},
    pages = {1809},
    author = {A Tamburrino and G Rubinacci},
    title = {A new non-iterative inversion method for electrical resistance tomography},
    journal = {Inverse Problems}
}

@article{TamburrinoPiscitelliZhou2021,
    doi = {10.1088/1361-6420/ac156c},
    url = {https://doi.org/10.1088/1361-6420/ac156c},
    year = {2021},
    month = {aug},
    publisher = {IOP Publishing},
    volume = {37},
    number = {9},
    pages = {095003},
    author = {Tamburrino, Antonello and Piscitelli, Gianpaolo and Zhou, Zhengfang},
    title = {The monotonicity principle for magnetic induction tomography},
    journal = {Inverse Problems}
}

@article{TamburrinoCorboPiscitelli2026,
    doi = {10.1088/1361-6420/ae4823},
    url = {https://doi.org/10.1088/1361-6420/ae4823},
    year = {2026},
    month = {mar},
    publisher = {IOP Publishing},
    volume = {42},
    number = {3},
    pages = {035001},
    author = {Tamburrino, Antonello and Corbo Esposito, Antonio and Piscitelli, Gianpaolo},
    title = {Monotonicity of the Laplace transform for tomography in dissipative systems},
    journal = {Inverse Problems}
}

@article{BorchersSohr1990,
    author = {W. Borchers and H. Sohr},
    title = {{On the equations rot v=g and div u=f with zero boundary conditions}},
    volume = {19},
    journal = {Hokkaido Mathematical Journal},
    number = {1},
    publisher = {Hokkaido University, Department of Mathematics},
    pages = {67 -- 87},
    year = {1990},
    doi = {10.14492/hokmj/1381517172},
    URL = {https://doi.org/10.14492/hokmj/1381517172}
}

@article{Bruhl2001,
author = {Br{\"u}hl, Martin},
title = {Explicit Characterization of Inclusions in Electrical Impedance Tomography},
journal = {SIAM Journal on Mathematical Analysis},
volume = {32},
number = {6},
pages = {1327-1341},
year = {2001},
doi = {10.1137/S003614100036656X},
URL = { 
        https://doi.org/10.1137/S003614100036656X
},
eprint = { 
        https://doi.org/10.1137/S003614100036656X
}
}

@article{Kirsch1998,
  author  = {Kirsch, Andreas},
  title   = {Characterization of the Shape of a Scattering Obstacle Using the Spectral Data of the Far Field Operator},
  journal = {Inverse Problems},
  volume  = {14},
  number  = {6},
  pages   = {1489--1512},
  year    = {1998},
  doi     = {10.1088/0266-5611/14/6/009}
}

@article{ColtonHaddarMonk2003,
  author  = {Colton, David and Haddar, Houssem and Monk, Peter},
  title   = {The Linear Sampling Method for Solving the Electromagnetic Inverse Scattering Problem},
  journal = {SIAM Journal on Scientific Computing},
  volume  = {24},
  number  = {3},
  pages   = {719--731},
  year    = {2003},
  doi     = {10.1137/S1064827501390467}
}

@article{ItoJinZou2013,
  author  = {Ito, Kazufumi and Jin, Bangti and Zou, Jun},
  title   = {A Direct Sampling Method for Inverse Electromagnetic Medium Scattering},
  journal = {Inverse Problems},
  volume  = {29},
  number  = {9},
  pages   = {095018},
  year    = {2013},
  doi     = {10.1088/0266-5611/29/9/095018}
}

@article{ChowItoZou2014,
  author  = {Chow, Yat Tin and Ito, Kazufumi and Zou, Jun},
  title   = {A Direct Sampling Method for Electrical Impedance Tomography},
  journal = {Inverse Problems},
  volume  = {30},
  number  = {9},
  pages   = {095003},
  year    = {2014},
  doi     = {10.1088/0266-5611/30/9/095003}
}

@article{Chen_2013,
doi = {10.1088/0266-5611/29/8/085005},
url = {https://doi.org/10.1088/0266-5611/29/8/085005},
year = {2013},
month = {jul},
publisher = {IOP Publishing},
volume = {29},
number = {8},
pages = {085005},
author = {Chen, Junqing and Chen, Zhiming and Huang, Guanghui},
title = {Reverse time migration for extended obstacles: acoustic waves},
journal = {Inverse Problems}
}

@article{ChenJiang,
    doi={10.1088/1361-6420/aea2c2},
	author={Chen, Junqing and Jiang, Chengzhe},
	title={A direct sampling method for magnetic induction tomography},
	journal={Inverse Problems},
	year={2026}
}
\end{document}